\documentclass{amsart}
\usepackage{amsmath,amssymb,mathrsfs,hyperref,color}

\usepackage{subfigure}
\usepackage{float} 
\usepackage{times}
\usepackage{tikz}
\usetikzlibrary{intersections}
\usepackage{paralist}

\makeatletter
\def\setliststart#1{\setcounter{\@listctr}{#1}%
  \addtocounter{\@listctr}{-1}}
\makeatother

\makeatletter
\@addtoreset{figure}{section}
\makeatother

\setcounter{tocdepth}{4}
\setcounter{secnumdepth}{4}

\usepackage{calc}
\newtheorem{The}{Theorem}[section]
\newtheorem{Cor}[The]{Corollary}
\newtheorem{Lem}[The]{Lemma}
\newtheorem{Pro}[The]{Proposition}

\newcounter{Mr}
\newtheorem{Result}[Mr]{\textbf{Main Result}}

\theoremstyle{definition}

\theoremstyle{remark}
\newtheorem{Rem}[The]{Remark}
\newtheorem{ex}[The]{Example}
\numberwithin{equation}{section}

\newcommand{\T}{\mathbb{T}}
\newcommand{\R}{\mathbb{R}}
\newcommand{\Z}{\mathbb{Z}}
\newcommand{\N}{\mathbb{N}}

\newcommand{\Q}{\mathbb{Q}}

\newcommand{\SING}{\mbox{\rm Sing}\, (u_c)}

\title[Dynamic and asymptotic behavior of singularities]{Dynamic and asymptotic behavior of singularities of certain weak KAM solutions on the torus}
\author{Piermarco Cannarsa \and Qinbo Chen \and Wei Cheng}
\address{Dipartimento di Matematica, Universit\`a di Roma ``Tor Vergata'',
Via della Ricerca Scientifica 1, 00133 Roma, Italy}
\email{cannarsa@mat.uniroma2.it}
\address{Morningside center of Mathematics, Academy of Mathematics and Systems Science, Chinese Academy of Sciences, Beijing 100190, China}
\email{chenqb@amss.ac.cn}
\address{Department of Mathematics, Nanjing University,
Nanjing 210093, China}
\email{chengwei@nju.edu.cn}
\date{\today}
\subjclass[2010]{35F21, 49L25, 37J50}
\keywords{Hamilton-Jacobi equation, weak KAM theory, generalized characteristic, singularities.}
\begin{document}
\begin{abstract}
	For mechanical Hamiltonian systems on the torus, we study the dynamical properties of the generalized characteristics semiflows associated with certain Hamilton-Jacobi equations, and build the relation between the $\omega$-limit set of this semiflow and the  projected Aubry set.
\end{abstract}
\maketitle

\section{Introduction}
The variational approach to Hamiltonian dynamical systems of Tonelli type have made  substantial progress in the past twenty years thanks to the celebrated contributions of Mather's theory, in Lagrangian formalism, and Fathi's weak KAM theory. In these theories, some specific invariant sets---namely, Mather sets, Aubry sets, and Ma\~n\'e sets---play the role of bridging the dynamic behavior of the given Hamiltonian system and the regularity properties of the  viscosity solution  to the associated Hamilton-Jacobi equation. The topological and metric properties of these invariant sets were widely studied in the past decades, as well as their dynamical implications. In this paper, we will study the onset, dynamic and asymptotic behavior of singularities for a certain class of weak KAM solutions on the torus.

Let $\T^n=\R^n/\Z^n$ be flat $n$-torus and $H=H(x,p):\T^n\times\R^n\to\R$ be a  Hamiltonian satisfying the standard Tonelli conditions: $H$ is of class $C^2$, and $p\mapsto H(x,p)$ is a convex function with uniformly superlinear growth. 

Let $u$ be a (locally semiconcave) viscosity solution of the Hamilton-Jacobi equation $H(x,Du(x))=0$. We say $x$ is a {\em critical point of $u$ with respect to $H$} if
\begin{equation}\label{def_crit}
	0\in\mbox{\rm co}\,H_p(x,D^+u(x)),
\end{equation}
where  $``\mbox{\rm co}"$  stands for convex hull and $D^+u(x)$ is the (Fr\'echet) superdifferential of $u$ at $x$. The set of all critical points of $u$ with respect to $H$ is denoted by 
\begin{equation}\label{critical_u_c}
\mbox{\rm Crit}_H\,(u)=\{x: 0\in\mbox{\rm co}\,H_p(x,D^+u(x))\}.
\end{equation} 
A point $x$ is called a {\em regular point of $u$ with respect to $H$} if $x\notin \text{\rm Crit}_H\,(u)$.

For any $c\in\R^n$, throughout this paper, we set
\begin{align*}
	H^c(x,p):=H(x,c+p)-\alpha_H(c),\quad (x,p)\in\T^n\times\R^n,
\end{align*}
where $\alpha_H(\cdot)$ is Mather's $\alpha$-function. From weak KAM theory,   there exists a $\T^n$-periodic weak KAM solution $u_c$ of the Hamilton-Jacobi equation
\begin{equation}\label{intr_HJ}
	H^c(x,Du_c(x))=0,\quad x\in \T^n.
\end{equation}

%
It is well known that $u_c$ may lose smoothness but is  semiconcave and Lipschitz continuous. We call $x\in\T^n$ a singular point of $u_c$ if $u_c$ fails to be differentiable at $x$, that is, the superdifferential $D^+u_c(x)$ is not a singleton. From now on, we denote by $\SING$ the set of all singular points of $u_c$. 

As noted in \cite{Albano-Cannarsa}, a key tool to describe the dynamics which governs the propagation of singularities of $u_c$ are the so-called generalized characteristics, i.e., solutions to the differential inclusion 
\begin{equation}\label{intr_gc}
\begin{cases}
\dot{\mathbf{x}}(s)\in\text{co}\,H^c_p(\mathbf{x}(s),D^+u_c(\mathbf{x}(s))), 
 &
 \mbox{a.e.}\,\, s\in[0,\tau]
 \vspace{.1cm}
 \\
 \mathbf{x}(0)=x_0.
 &
\end{cases}
\end{equation}
It is already known that if $x_0\in\SING$ and $x_0\in \text{Crit}_{H^c}\,(u_c)$, then \eqref{intr_gc} has a Lipschitz solution $\mathbf{x}$ such that $\mathbf{x}(s)\in\SING$ for all $s\in[0,\tau]$, with $\tau$ small enough, and $\mathbf{x}:[0,\tau]\to\T^n$ is injective (see \cite{Albano-Cannarsa}). We say a Hamiltonian $H$ {\em has the uniqueness property} if there exists a unique generalized characteristic staring from any given initial point.

In \cite{Cannarsa-Cheng3}, a global result for the propagation of singularities along  generalized characteristics was obtained. So, we are now in a position to start the analysis of the global behaviour of the  generalized characteristics semiflow. 

In this paper, we have decided to concentrate on the important example of mechanical systems on the torus $\T^n$. We have made this decision for several reasons. First, the uniqueness of the solution to \eqref{intr_gc} is not guaranteed for general Hamiltonians. So, the associated semiflow may fail to be well defined,  but such a semiflow is well defined for mechanical systems. Second, even for mechanical systems, the global theory of generalized characteristics on arbitrary manifolds is more complicated than the one on the torus. In general, this  semiflow depends on additional topological objects such as the critical set of the solution  of \eqref{intr_HJ}, that can be empty or nonempty as we show by examples at the end of Section 2. Such a complex structure, which is also related to Novikov's theory for the critical points of closed 1-forms (see \cite{Farber} and the references therein), will be the object of our future work. 

Let us consider a {\em mechanical Hamiltonian} of the following form:
\begin{equation}\label{eq:mech_intro}
	H(x,p)=\frac 12\langle A(x)p,p\rangle+V(x),\quad x\in\T^n, p\in\R^n
\end{equation}
where $x\mapsto A(x)$ is a $C^2$-smooth map taking values in the real space of $n\times n$ positive definite symmetric matrices, and $V$ is  a $C^2$-smooth potential function. Without loss of generality, we can assume  $\max\limits_{x} V(x)=0$. A typical Hamiltonian $H$ possessing the uniqueness property is the mechanical Hamiltonian \eqref{eq:mech_intro}. It is clear that any mechanical Hamiltonian $H$ in the form as \eqref{eq:mech_intro} can be looked as a Hamiltonian defined on $T^*\T^n$ or on $T^*\R^n$ and $\T^n$-periodic in the spatial variable $x$.

For the Hamiltonian  \eqref{eq:mech_intro}, it is worth mentioning that, by \eqref{def_crit}, for any viscosity solution $u_c$ of \eqref{intr_HJ}, $x$ is a critical point of $u_c$ with respect to $H^c$ if and only if
\begin{align*}
	0\in A(x)(c+D^+u_c(x)),
\end{align*}
namely
\begin{align}\label{cri_v}
	0\in c+D^+u_c(x)
\end{align}
since the matrix $A(x)$ is positive definite. 

By lifting to the universal covering space, $u_c$ can also be viewed as a function defined on $\R^n$,  in this case we define
\begin{equation}\label{intro:v}
v_c(x)=\langle c,x\rangle+u_c(x) \quad(x\in\R^n).
\end{equation} 
The inclusion formula \eqref{cri_v} leads us to define the following critical set, 
\begin{equation}\label{critical_v_c}
	\mbox{\rm Crit}\, (v_c)=\{x\in\R^n: 0\in D^+v_c(x)\}.
\end{equation} 
Any point $x\in \mbox{\rm Crit}\, (v_c)$ is called {\em a critical point of the function $v_c$}, otherwise we call it {\em a regular point}.
\begin{Rem}
It seems that we have provided two different definitions for critical point. However, to some extent, they are equivalent for the mechanical Hamiltonian \eqref{eq:mech_intro}. Indeed, $x$  is a critical point of $u_c$ with respect to $H^c$\ if and only if $x$ is a critical point of $v_c$. 
\end{Rem}

Therefore, in this paper we will discuss the dynamics of the generalized characteristics semiflows from two sides, one is the semiflow $\Phi_t$ on $\R^n$ (see \eqref{semiflow_noncmpt} for definition), the other is the semiflow $\phi_t$  on $\T^n$ ( see \eqref{semiflow_cmpt} for definition). 

The semiflow  $\Phi_t$  is essentially {\em gradient-like}. So, using a generalization of Conley's theory on gradient flows, we can show that

\begin{Result}
	Let $H$ be the mechanical Hamiltonian as in \eqref{eq:mech_intro} with $\Phi_t$ the associated semiflow of generalized characteristics. If
	 the regular values of $v_c$ are dense in $\R$,  then $\mathcal{R}(\Phi_t)\subset\mbox{\rm Crit}\, (v_c)$. In particular, we have
\begin{equation}
	L(\Phi_t):=\mbox{\rm cl}\, \Big(\cup\{\omega(\Phi_t,x): x\in X\}\Big)=\Omega(\Phi_t)=\mathcal{R}(\Phi_t)=\mbox{\rm Crit}\, (v_c),
\end{equation}
where $\Omega(\Phi_t)$ and $\mathcal{R}(\Phi_t)$ are the $\omega$-limit set and chain-recurrent set of $\Phi_t$, respectively.
\end{Result}

For the dynamics of the semiflow $\phi_t$  on $\T^n$, recall that Fathi introduced the set $\mathcal{I}_{u_c}$\footnote{We call $\mathcal{I}_u$ the projected Aubry set with respect to a weak KAM solution $u$. The precise definition of $\mathcal{I}_u$ is given in Section 3.} for a weak KAM solution $u_c$  of \eqref{intr_HJ} in \cite{Fathi-book}. Using the approach in \cite{Cannarsa-Cheng3}, we obtain some new results on the asymptotic behaviour of such semiflows.  These results build a bridge between the singular set $\SING$ and $\mathcal{I}_{u_c}$.

\begin{Result}
	Suppose $H$ is a mechanical Hamiltonian as in \eqref{eq:mech_intro} with $u_c$ a  weak KAM solution of \eqref{intr_HJ} and $x\in\SING$. Let $\mathbf{x}$ be the unique generalized characteristic starting from $x$ and $C(x)$ be the connected component of $\SING$  containing $x$. If $\mbox{\rm Crit}_{H^c} (u_c)\cap\overline{C(x)}=\emptyset$, then there exists a global generalized characteristic $\mathbf{y}:\R\to\T^n$ such that $\{\mathbf{y}(t): t\in\R\}$ is contained in $\overline{\{\mathbf{x}(t):t>0\}}\subset\overline{C(x)}$. Moreover, we have either
	\begin{enumerate}[\rm (1)]
	\item $\mathbf{y}:\R\to\T^n$ is a global singular generalized characteristic, or
	\item $\overline{C(x)}\cap\mathcal{I}_{u_c}\neq\emptyset$.  In particular, $\overline{C(x)}$ intersects the Aubry set.
	\end{enumerate}
\end{Result}

\begin{Result}
	Let $H$ be a mechanical Hamiltonian with $u_c$ a weak KAM solution of \eqref{intr_HJ}. Suppose that $x\in\SING$ and $\mathbf{x}$ is the unique generalized characteristic starting from $x$. If there exists a point of differentiability of $u_c$ in $\omega(\mathbf{x})$, the $\omega$-limit set of $\mathbf{x}$, then $\omega(\mathbf{x})$ intersects the Aubry set.
\end{Result} 

Main result 1 will be proved in section 2 and main result 2 and 3 will be shown in section 3. We also give an illustrative example of such connections between $\SING$ and the Aubry set in the context of area-preserving monotone twist maps.

The paper is organized as follows: in section 2, we mainly discuss the dynamics of generalized characteristics semiflows $\Phi_t$ defined on $\R^n$ and the relations among its various limiting sets. In section 3, we concentrate on the dynamics of generalized characteristics semiflows $\phi_t$ defined on $\T^n$, and show that the $\omega$-limit set of a singular generalized characteristic on $\T^n$ can intersect $\mathcal{I}_{u_c}$ under certain conditions. In section 4, we give an illustrative example of such a connection. In the appendix, we also give an explanation of how to transfer the results in \cite{Cannarsa-Cheng3} from $\R^n$ to $\T^n$.

\medskip

\noindent{\bf Acknowledgments} This work was partially supported by the Natural Scientific Foundation of China (Grant No. 11631006 and No.11790272) and the Istituto Nazionale di Alta Matematica ``Francesco Severi'' (GNAMPA 2017 Research Projects). The authors acknowledge the MIUR Excellence Department Project awarded to the Department of Mathematics, University of Rome Tor Vergata, CUP E83C18000100006.

\section{Generalized characteristics semiflows}
\subsection{Basic properties of generalized characteristics}

In this section, we will always refer to $H$ as in \eqref{eq:mech_intro}. It is well known that for any $c\in\R^n$ there is a unique $\alpha_H(c)\in \R$ such that  equation \eqref{intr_HJ} has a $\T^n$-periodic viscosity solution $u_c$. Let $v_c$ be as in \eqref{intro:v}. 

Notice that, By lifting to $\R^n$, the Hamiltonian  \eqref{eq:mech_intro} can also be regarded as a Hamiltonian $H(x,p):\R^n\times\R^n\to\R$ which is $\T^n$ periodic in $x$. The following proposition, based on the results in \cite{Cannarsa-Cheng}, \cite{Cannarsa-Cheng3}, \cite{Cannarsa-Yu} and \cite{Albano-Cannarsa}, is a collection of properties of the generalized characteristics associated with the pair $\{H,v_c\}$. 

\begin{Pro}\label{properties_g_c}
Let $H$ , $c\in\R^n$ and  $v_c$  be described as above. Fix any $x\in\R^n$.
\begin{enumerate}[\rm (a)]
  \item There is a unique Lipschitz arc $\mathbf{x}:[0,+\infty)\to\R^n$ such that
\begin{equation}
\label{eq:genchar}
\dot{\mathbf{x}}(s)\in A(\mathbf{x}(s))D^+v_c(\mathbf{x}(s))
\end{equation}
and $\mathbf{x}(0)=x$. Moreover, denoting by $\mathbf{y}$ the solution  of \eqref{eq:genchar} starting from the point $y\in\R^n$, we have 
	$$
	|\mathbf{x}(s)-\mathbf{y}(s)|\leqslant C|x-y|, \quad s\in[0,\tau]
	$$
for some constant $C\geqslant 0$ depending on $\tau>0$.
\item If $x\in\mbox{\rm Sing}\, (v_c)$, then $\mathbf{x}(s)\in\mbox{\rm Sing}\, (v_c)$ for all $s\in [0,+\infty)$.	
   \item The right derivative $\dot{\mathbf{x}}^+(s)$ exists for all $s\in[0,+\infty)$ and
\begin{equation*}
\label{generalized_characteristics_mech_sys}
\dot{\mathbf{x}}^+(s)=A(\mathbf{x}(s))p(s),\qquad\forall s\in[0,+\infty),
\end{equation*}
where $p(s)$ is the unique point of $D^+v_c(\mathbf{x}(s))$ such that
\begin{equation}\label{minimality}
\langle A(\mathbf{x}(s))p(s),p(s)\rangle=\min_{p\in D^+v_c(\mathbf{x}(s))}\langle A(\mathbf{x}(s))p,p\rangle.
\end{equation}
Moreover, $\dot{\mathbf{x}}^+(s)$ is right-continuous.
   \item The right derivative of $v_c(\mathbf{x}(\cdot))$ has the following representation:
   \begin{equation}
   	\frac d{ds^+}v_c(\mathbf{x}(s))=\langle p(s),A(\mathbf{x}(s))p(s)\rangle,\quad s\in[0,+\infty)
   \end{equation}
   where $p(s)$ is given in point (c) above.
   \item If  $\dot{\mathbf{x}}^+(s)\not=0$ for all $s\in [0,\tau]$, then 
\begin{equation}\label{eq:increasing}
v_c(\mathbf{x}(s_1))<v_c(\mathbf{x}(s_2)),\quad\ \forall~ 0\leqslant s_1<s_2\leqslant\tau.
\end{equation}
   \item If $0\notin D^+v_c(x)$, then $\mathbf{x}$ is injective on $[0,\tau]$ for some $\tau>0$.
   \item If $\alpha(c)>0$, then the set $\mbox{\rm Crit}\,(v_c)\subset\mbox{\rm Sing}\,(v_c)$.
 \end{enumerate}
\end{Pro}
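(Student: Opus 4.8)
The plan is to read parts (a)--(d) as a translation of the general theory of generalized characteristics into the mechanical case \eqref{eq:mech_intro}, relying on \cite{Albano-Cannarsa,Cannarsa-Cheng,Cannarsa-Cheng3,Cannarsa-Yu}, and then to deduce (e)--(g) from that translation together with the normalization $\max_x V(x)=0$. For (a), the starting observation is that for a mechanical Hamiltonian $H_p(x,p)=A(x)p$, so $H^c_p(x,p)=A(x)(c+p)$, and since $D^+v_c(x)=c+D^+u_c(x)$ is already convex and compact,
\[
\mbox{\rm co}\,H^c_p(x,D^+u_c(x))=A(x)(c+D^+u_c(x))=A(x)D^+v_c(x);
\]
thus the inclusion \eqref{intr_gc} coincides with \eqref{eq:genchar}. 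Existence of a Lipschitz solution on $[0,+\infty)$ is the usual viability argument for the upper semicontinuous, nonempty convex-compact-valued field $x\mapsto A(x)D^+v_c(x)$, which is globally bounded because $v_c$ is globally Lipschitz on $\R^n$ and $A$ is $\T^n$-periodic. Uniqueness, and with it the Lipschitz dependence $|\mathbf{x}(s)-\mathbf{y}(s)|\le C|x-y|$ via Gr\"onwall's inequality, is the one genuinely delicate point and the reason mechanical systems are singled out: it rests on a one-sided (monotonicity) estimate for $A(\cdot)D^+v_c(\cdot)$ combining the semiconcavity of $v_c$ with the positive definiteness of $A$, exactly as established in \cite{Cannarsa-Yu,Cannarsa-Cheng3}, which I would simply invoke.

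For (b), the global propagation theorem of \cite{Cannarsa-Cheng3} (see also \cite{Albano-Cannarsa}) asserts precisely that the arc of (a) issued from a singular point stays singular for all $s\ge0$. For (c), the structure theory of \cite{Albano-Cannarsa,Cannarsa-Cheng} identifies $\dot{\mathbf{x}}^+(s)$ with the minimal-energy selection from the superdifferential, $\dot{\mathbf{x}}^+(s)=A(\mathbf{x}(s))p(s)$ with $p(s)$ the unique minimizer in \eqref{minimality} (unique because $p\mapsto\langle A(\mathbf{x}(s))p,p\rangle$ is strictly convex on the convex set $D^+v_c(\mathbf{x}(s))$), and gives the right-continuity of $\dot{\mathbf{x}}^+$. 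Part (d) is the one-sided chain rule for semiconcave functions along a Lipschitz arc, $\frac{d}{ds^+}v_c(\mathbf{x}(s))=\min_{q\in D^+v_c(\mathbf{x}(s))}\langle q,\dot{\mathbf{x}}^+(s)\rangle$, combined with the first-order optimality condition for \eqref{minimality}, $\langle A(\mathbf{x}(s))p(s),q-p(s)\rangle\ge0$ for every $q\in D^+v_c(\mathbf{x}(s))$, which forces this minimum to be attained at $q=p(s)$ and to equal $\langle p(s),A(\mathbf{x}(s))p(s)\rangle$.

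Parts (e) and (f) are then elementary. With $g(s)=v_c(\mathbf{x}(s))$, part (d) gives $g$ a right derivative $\langle p(s),A(\mathbf{x}(s))p(s)\rangle\ge0$ at every $s$, which, $A$ being positive definite, vanishes iff $p(s)=0$ iff $\dot{\mathbf{x}}^+(s)=0$. A continuous function with nonnegative right derivative everywhere is nondecreasing, and if such a $g$ were constant on a subinterval its right derivative would vanish at the left endpoint; hence $\dot{\mathbf{x}}^+\ne0$ on $[0,\tau]$ makes $g$ strictly increasing, which is (e). For (f), if $0\notin D^+v_c(x)$ then, by upper semicontinuity of $D^+v_c$, there are a neighborhood $U\ni x$ and $\varepsilon>0$ with $\mbox{\rm dist}(0,D^+v_c(y))\ge\varepsilon$ on $U$; choosing $\tau>0$ so small that $\mathbf{x}([0,\tau])\subset U$ gives $|p(s)|\ge\varepsilon$, hence $\dot{\mathbf{x}}^+(s)\ne0$ on $[0,\tau]$, so by (e) $g$ is strictly increasing there and $\mathbf{x}$ is injective on $[0,\tau]$.

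Finally (g). Assume $\alpha_H(c)>0$, and suppose for contradiction that some $x\in\mbox{\rm Crit}\,(v_c)$ is a point of differentiability of $v_c$ (equivalently of $u_c$). Then $D^+v_c(x)=\{Dv_c(x)\}$, so $0\in D^+v_c(x)$ forces $Dv_c(x)=0$, i.e.\ $Du_c(x)=-c$. Since $u_c$ is a viscosity solution of \eqref{intr_HJ} differentiable at $x$, the equation holds there with equality, $H(x,c+Du_c(x))=\alpha_H(c)$; as $c+Du_c(x)=0$ and $H(x,0)=V(x)$, this gives $V(x)=\alpha_H(c)$, contradicting $V(x)\le\max_x V(x)=0<\alpha_H(c)$. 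Hence every critical point of $v_c$ is a singular point, which is (g). The main obstacle in the whole statement is not in this short argument but in the uniqueness assertion of (a) with its attendant Lipschitz estimate, which is where the mechanical structure is essential and for which we defer to \cite{Cannarsa-Yu,Cannarsa-Cheng3}.
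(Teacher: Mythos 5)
Your proposal is correct and follows essentially the same route as the paper, which states this proposition without proof as a compilation of results from \cite{Albano-Cannarsa}, \cite{Cannarsa-Cheng}, \cite{Cannarsa-Cheng3} and \cite{Cannarsa-Yu}: like the paper, you defer the substantive points (existence, uniqueness and the Lipschitz estimate in (a), global propagation in (b), the minimal-selection structure and right-continuity in (c)) to those same references. Your elementary verifications of (d)--(g) --- the semiconcave directional-derivative formula combined with the first-order optimality condition for \eqref{minimality}, the resulting monotonicity of $v_c\circ\mathbf{x}$, and the viscosity-equation argument at a point of differentiability yielding $V(x)=\alpha_H(c)>0$ for (g) --- are sound and simply make explicit what the paper leaves implicit.
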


Thus for any $x\in\R^n$, if we denote by $\mathbf{x}(\cdot,0,x)$ the unique generalized characteristic starting from $x$,  then it is clear that
\begin{equation}\label{semiflow_noncmpt}
\Phi_t(x):=\mathbf{x}(t,0,x),\quad (t,x)\in[0,\infty)\times\R^n	
\end{equation}
defines a semiflow on $\R^n$.

\subsection{Dynamic and topological structures of generalized characteristics semiflows}
Now we start to further analyze the properties of semiflows $\Phi_t$ on $\R^n$. Before that, we first introduce some basic facts on the the topological structure of semi-dynamical systems on a metric space $(X,d)$ (not necessarily compact). This can be considered as an extension of Conley's celebrating theory of general dynamical systems (see, for instance, \cite{Conley1978}, \cite{Conley1988} for the flows on compact metric space and \cite{Hurley} for the semiflows on arbitrary metric space).

\subsubsection{Conley's theory on semiflows}

A {\em semiflow} on $X$ is a continuous map $\Phi(t,x)=\Phi_t(x):[0,+\infty)\times X\to X$ such that $\Phi_0=\text{Id}$ where $\text{Id}$ stands for the identity map on $X$ and $\Phi_t\circ\Phi_s=\Phi_{t+s}$ for all $t\geqslant0$ and $s\geqslant 0$. Let $\mathscr{P}$ be the set of positive continuous functions on $X$. For any $\varepsilon\in\mathscr{P}$ and $T>0$, we define the {\em $(\varepsilon,T)$-chain} of a semiflow $\Phi_t$ as a finite number of pairs $(x_0,t_0),(x_1,t_1),\ldots,(x_n,t_n)\in X\times[0,+\infty)$ such that 
\begin{inparaenum}[(1)]
\item $t_j\geqslant T$ for $j=1,\ldots, n$;
\item $d(\Phi_{t_j}(x_j),x_{j+1})<\varepsilon(\Phi_{t_j}(x_j))$ for $j=0,\ldots, n-1$,
\end{inparaenum}
and the number $n$ is called the {\em length} of the chain. A point $x\in X$ is said to be {\em chain recurrent} if for any $\varepsilon\in\mathscr{P}$ and $T>0$ there exists an $(\varepsilon,T)$-chain with its length at least 1 connecting $x$ to itself. The set of all the chain recurrent points of a semiflow $\Phi_t$ is denoted by $\mathcal{R}(\Phi_t)$

Let $\Phi_t$ be a semiflow on $X$, a nonempty open set $U\subset X$ is said to be a {\em preattractor} of $\Phi_t$ if there exists $T>0$ such that
$$
\overline{\Phi([T,+\infty)\times U)}\subset U.
$$
An {\em attractor} $A$ defined by a preattractor $U$ is determined by
$$
A=\bigcap_{t\geqslant0}\overline{\Phi([t,+\infty)\times U)}=\bigcap_{t\geqslant T}\overline{\Phi([t,+\infty)\times U)}.
$$
The {\em basin} of an attractor $A$, denoted by $\mathcal{B}(A)$, consists of all point $x$ with the property that some point on the forward orbit of $x$ lies in a preattractor $U$ that determines $A$. 

Now, we can formulate an extension of Conley's theorem.

\begin{Pro}[\cite{Hurley}]\label{Conley_Thm}
Suppose $(X,d)$ is a metric space and $\Phi_t$ is a semiflow on $X$. Then we have
$$
X\setminus\mathcal{R}(\Phi_t)=\bigcup\limits_{A}~(\mathcal{B}(A)\setminus A),
$$	
where $A$ varies over the collection of attractors of $\Phi_t$ and $\mathcal{B}(A)$ denotes the basin of attraction of $A$.
\end{Pro}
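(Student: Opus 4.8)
The plan is to prove the two inclusions $\bigcup_A(\mathcal B(A)\setminus A)\subset X\setminus\mathcal R(\Phi_t)$ and $X\setminus\mathcal R(\Phi_t)\subset\bigcup_A(\mathcal B(A)\setminus A)$ separately; neither argument uses compactness of $X$, only that $(X,d)$ is metric and that the chain thresholds $\varepsilon$ are allowed to vary over $\mathscr P$.

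\emph{First inclusion.} I would fix an attractor $A$ and a point $x\in\mathcal B(A)\setminus A$, and aim to produce $\varepsilon\in\mathscr P$ and $T^{*}>0$ for which no $(\varepsilon,T^{*})$-chain joins $x$ to itself. By definition of the basin there are a preattractor $U$ determining $A$, say with $\overline{\Phi([T,\infty)\times U)}\subset U$, and a time $t_0\ge 0$ with $\Phi_{t_0}(x)\in U$. Since $x\notin A=\bigcap_{t\ge T}\overline{\Phi([t,\infty)\times U)}$, there is $t_1\ge T$ with $x\notin W:=\overline{\Phi([t_1,\infty)\times U)}$; note $W$ is closed, $W\subset U$, $\Phi([s,\infty)\times U)\subset W$ for $s\ge t_1$, and $\delta_0:=d(x,W)>0$. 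I would then set $T^{*}=t_0+t_1$ and choose $\varepsilon\in\mathscr P$ so small that $\varepsilon(z)<\tfrac12\min\{d(z,X\setminus U),\delta_0\}$ for all $z\in W$; such a globally defined positive continuous $\varepsilon$ exists by an inf-convolution construction, and this is essentially the only place the metric structure of $X$ is used. A short induction along an arbitrary $(\varepsilon,T^{*})$-chain issuing from $x$ then shows: the first flow segment (increment $\ge T^{*}\ge t_0+t_1$) carries $x$ into $W$ because $\Phi_{t_0}(x)\in U$; the ensuing jump lands in $U$ within distance $\tfrac12\delta_0$ of $W$; and each further flow segment (increment $\ge t_1$) returns the chain to $W$ while each jump keeps it in $U$ within $\tfrac12\delta_0$ of $W$. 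Hence every point of the chain after the first lies at distance $>\tfrac12\delta_0$ from $x$, so the chain cannot close up at $x$, and $x\notin\mathcal R(\Phi_t)$.

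\emph{Second inclusion.} Given $x\notin\mathcal R(\Phi_t)$, I would fix $\varepsilon_0\in\mathscr P$ and $T_0>0$ for which no $(\varepsilon_0,T_0)$-chain of positive length joins $x$ to $x$, and set
\[
\Gamma:=\{\,y\in X:\text{some }(\varepsilon_0,T_0)\text{-chain of length }\ge 1\text{ runs from }x\text{ to }y\,\}.
\]
The goal is to show $\Gamma$ is a preattractor whose attractor $A_\Gamma$ satisfies $x\in\mathcal B(A_\Gamma)\setminus A_\Gamma$. It is immediate that $\Gamma$ is nonempty ($\Phi_{T_0}(x)\in\Gamma$) and that $x\notin\Gamma$. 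Openness of $\Gamma$ follows because the inequalities defining a chain are strict and its terminal point is otherwise unconstrained, so the last point of a witnessing chain may be perturbed. The crux is $\overline{\Phi([T_0,\infty)\times\Gamma)}\subset\Gamma$: if $z=\lim_k\Phi_{t_k}(y_k)$ with $y_k\in\Gamma$ and $t_k\ge T_0$, I would take a chain from $x$ to $y_k$, take its last time-increment to be $t_k$ (legitimate since $t_k\ge T_0$ and that increment enters no inequality), and append one more pair with terminal point $z$; since $\Phi_{t_k}(y_k)\to z$ and $\varepsilon_0$ is continuous and positive, the last jump is admissible for all large $k$, producing an honest $(\varepsilon_0,T_0)$-chain from $x$ to $z$, so $z\in\Gamma$. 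Taking $A_\Gamma=\bigcap_{t\ge 0}\overline{\Phi([t,\infty)\times\Gamma)}$, one gets $x\in\mathcal B(A_\Gamma)$ (because $\Phi_{T_0}(x)\in\Gamma$) and $x\notin A_\Gamma$ (because $A_\Gamma\subset\overline{\Phi([T_0,\infty)\times\Gamma)}\subset\Gamma$ and $x\notin\Gamma$), hence $x\in\mathcal B(A_\Gamma)\setminus A_\Gamma$.

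The routine points — openness of $\Gamma$, the neighbourhood bookkeeping in the induction of the first part, the inf-convolution giving $\varepsilon$ — are elementary. The step I expect to be the real obstacle, and the one that makes this sharper than its classical compact ancestor, is verifying that $\Gamma$ is genuinely a preattractor, that is, $\overline{\Phi([T_0,\infty)\times\Gamma)}\subset\Gamma$: on a non-compact $X$ there is no uniform separation to exploit, so the argument must instead rest on the strictness of the chain inequalities together with the continuity of the variable threshold $\varepsilon_0$, through the chain-concatenation trick above.
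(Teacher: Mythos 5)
Your proof is correct: the first inclusion follows from your separation argument with the preattractor $U$, the closed set $W=\overline{\Phi([t_1,\infty)\times U)}$ and a variable threshold $\varepsilon<\tfrac12\min\{d(\cdot,X\setminus U),\delta_0\}$ on $W$, and the second from showing that the endpoint set $\Gamma$ of $(\varepsilon_0,T_0)$-chains issuing from $x$ is a preattractor whose attractor excludes $x$ while containing a forward-orbit point of $x$ in its basin. The paper itself offers no proof of this proposition, citing Hurley, and your argument is essentially Hurley's original one (his chain-limit sets play the role of your $\Gamma$), so there is nothing to reconcile beyond noting that you have read the paper's slightly mis-indexed chain definition in the intended way, with every flow time bounded below by $T$.
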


\subsubsection{Limiting sets of generalized characteristics semiflows}

Consider the semiflows $\Phi_t$ of generalized characteristics \eqref{semiflow_noncmpt} defined on $X=\R^n$. For any $r\in\R$, it is standard to define $U_r=\{x: v_c(x)>r\}$. 
\begin{Lem}\label{preattractor}
Let $r\in\R$ be a regular value of $v_c$, then $\cup_{t>0}\overline{\Phi_t(U_r)}\subset U_r$. In other words, $U_r$ is a preattractor.
\end{Lem}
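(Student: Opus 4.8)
The plan is to show that once $v_c(x)>r$, the value $v_c$ along the generalized characteristic starting at $x$ stays strictly above $r$ for all positive time, and moreover stays uniformly above $r$ on any finite time interval, so that the closure of $\Phi_t(U_r)$ remains inside $U_r$. The key structural input is Proposition~\ref{properties_g_c}, parts (d) and (e): along a generalized characteristic $\mathbf{x}(\cdot)$ the function $s\mapsto v_c(\mathbf{x}(s))$ is nondecreasing, since its right derivative equals $\langle p(s),A(\mathbf{x}(s))p(s)\rangle\geqslant 0$ by positive definiteness of $A$. Thus for any $x$ with $v_c(x)>r$ we immediately get $v_c(\Phi_t(x))\geqslant v_c(x)>r$ for all $t\geqslant 0$, which shows $\Phi_t(U_r)\subset U_r$. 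The remaining and slightly more delicate point is to pass to the closure.

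First I would fix $r$ a regular value of $v_c$ and fix $t>0$. Take any point $z\in\overline{\Phi_t(U_r)}$, so there is a sequence $x_k\in U_r$ with $\Phi_t(x_k)\to z$. I want to conclude $v_c(z)>r$. The monotonicity gives $v_c(\Phi_t(x_k))\geqslant v_c(x_k)>r$, and since $v_c$ is continuous, passing to the limit yields $v_c(z)\geqslant r$. This is not quite enough: I must rule out $v_c(z)=r$. Here is where the hypothesis that $r$ is a regular value enters. Suppose for contradiction $v_c(z)=r$. Since $z$ is a regular point, $0\notin D^+v_c(z)$, hence $0\notin A(z)D^+v_c(z)$, so the generalized characteristic $\mathbf{z}(\cdot)=\Phi_\cdot(z)$ starting at $z$ has $\dot{\mathbf{z}}^+(0)\neq 0$; by right-continuity of $\dot{\mathbf{x}}^+$ (Proposition~\ref{properties_g_c}(c)) we have $\dot{\mathbf{z}}^+(s)\neq 0$ on some $[0,\sigma]$, and then Proposition~\ref{properties_g_c}(e) gives $v_c(\mathbf{z}(\sigma))>v_c(z)=r$, i.e.\ $v_c(\Phi_\sigma(z))>r$. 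On the other hand, $\Phi_\sigma$ is continuous (indeed the Lipschitz estimate in Proposition~\ref{properties_g_c}(a) applies), so $\Phi_\sigma(\Phi_t(x_k))\to\Phi_\sigma(z)$, i.e.\ $\Phi_{t+\sigma}(x_k)\to\Phi_\sigma(z)$; but $\Phi_{t+\sigma}(x_k)\in U_r$ has $v_c>r$ and actually we need a reverse-direction contradiction. Cleaner: run the characteristic backward in the sense of comparing values. Since $v_c$ is nondecreasing along characteristics, $r<v_c(x_k)\leqslant v_c(\Phi_t(x_k))$, and taking limits $r\leqslant v_c(z)$; if equality held, then by the argument above the value would have to have been $\leqslant r$ at all earlier times on the orbit through $z$, yet the orbit through $z$ is, for small forward time, the limit of orbits with values $>r$ — more precisely, $v_c(\Phi_t(x_k))\geqslant v_c(x_k)>r$ combined with continuity forces $v_c(z)>r$ directly once we exploit that a regular level cannot be an accumulation value from strictly above along a monotone family. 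I would make this rigorous by the following device: by strict monotonicity at regular points, the set $\{v_c\le r\}$ is forward-invariant and, since $r$ is regular, $\Phi$ cannot carry any point of the level set $\{v_c=r\}$ while staying in it; hence $\overline{\Phi_t(U_r)}$, being a subset of the closed forward-invariant set $\{v_c\ge r\}$, cannot meet $\{v_c=r\}$ because any such meeting point would immediately flow into $\{v_c>r\}$ contradicting that it is a limit of the forward image at the \emph{same} time $t$.

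The main obstacle, then, is precisely the closure step: ensuring that $\overline{\Phi_t(U_r)}$ does not touch the critical level set $\{v_c=r\}$, and the whole weight of the ``regular value'' hypothesis is used there. I expect the clean way to organize it is: (i) establish forward-monotonicity of $v_c$ along characteristics from Proposition~\ref{properties_g_c}(d); (ii) establish strict increase in finite time from any regular point via parts (c) and (e); (iii) note $v_c^{-1}([r,\infty))$ is closed and forward-invariant, so $\overline{\Phi_t(U_r)}\subset v_c^{-1}([r,\infty))$; (iv) if $z\in\overline{\Phi_t(U_r)}$ had $v_c(z)=r$, then $z$ is regular (as $r$ is a regular value), so $v_c(\Phi_s(z))>r$ for small $s>0$, while continuity of $\Phi_s$ and $\Phi_{t}(x_k)\to z$ would give $\Phi_{t+s}(x_k)\to \Phi_s(z)$; choosing instead to look just before, one shows any neighborhood of $z$ meets both $\{v_c>r\}$ (from the $x_k$) and, flowing slightly, still $\{v_c>r\}$, which is consistent — so the contradiction must be extracted from the fact that $v_c(\Phi_t(x_k))\ge v_c(x_k)>r$ \emph{already}, giving $v_c(z)\ge r$ with equality impossible because $z$ regular implies $z\notin\overline{\{v_c>r\}}\cap\{v_c=r\}$ can only happen if characteristics through $z$ immediately leave the level, which is the case, but $z$ being approached by points whose value is already strictly bigger forces $v_c(z)\geqslant r$ only — hence one must instead argue that $v_c$ restricted to a small ball around $z$ takes values $>r$ on a relatively open dense set, incompatible with $v_c(z)=r$ being a strict local minimum along the flow direction unless $0\in D^+v_c(z)$. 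I would ultimately present it via step (iv) in the contrapositive: \emph{if} $v_c(z)=r$ then $0\notin D^+v_c(z)$, so by (ii) the backward-in-value monotonicity shows all preimages on the orbit had value $<r$, contradicting $v_c(x_k)>r$; this last sentence is the crux and is exactly where regularity of $r$ is indispensable.
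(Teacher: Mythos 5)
Your preliminary steps (monotonicity of $v_c$ along characteristics from Proposition \ref{properties_g_c}(d), and strict increase for small positive time from a regular point via parts (c) and (e)) are correct and are also ingredients of the paper's argument, but the closure step --- which you yourself identify as the crux --- is not actually closed. Your concluding claim is: if $z\in\overline{\Phi_t(U_r)}$ had $v_c(z)=r$, then $z$ is regular and ``backward-in-value monotonicity shows all preimages on the orbit had value $<r$, contradicting $v_c(x_k)>r$.'' This is a non sequitur: the points $x_k$ are not preimages of $z$ under $\Phi_t$; they are preimages of the nearby points $y_k=\Phi_t(x_k)$, and the limit point $z$ need not have any preimage at all. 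There is no contradiction in having $r<v_c(x_k)\leqslant v_c(y_k)\to r$: both sequences of values may approach $r$ from above. To produce a contradiction you must attach $z$ to a genuine orbit, and on the noncompact space $\R^n$ you cannot simply extract a convergent subsequence of $\{x_k\}$; this is exactly the point your sketch circles around without resolving.

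The missing idea is the quantitative, gradient-like estimate on which the paper's proof rests: writing the value increase along the characteristic as $\int_0^t\langle A^{-1}(\mathbf{x}_k(s))\dot{\mathbf{x}}_k^+(s),\dot{\mathbf{x}}_k^+(s)\rangle\,ds$ and using the uniform positive definiteness of $A$ (with constant $\lambda>0$, available by periodicity) together with the Cauchy--Schwarz/H\"older inequality yields $v_c(y_k)-v_c(x_k)\geqslant \frac{\lambda}{t}\,|y_k-x_k|^2$. If $v_c(z)=r$, then $v_c(y_k)-v_c(x_k)\to 0$, so this estimate forces $|y_k-x_k|\to 0$, hence $x_k\to z$, and the continuous dependence in Proposition \ref{properties_g_c}(a) gives $\Phi_t(z)=z$. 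That is the contradiction with regularity of $z$: at a regular point $v_c$ strictly increases along the characteristic, so $z$ cannot be fixed by $\Phi_t$. Without this estimate (or some other device giving compactness of $\{x_k\}$), your argument only yields $v_c(z)\geqslant r$ and cannot exclude $v_c(z)=r$; as proposed, the proof has a genuine gap.
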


\begin{proof}
We prove by contradiction. We fix any $t>0$. Suppose $\overline{\Phi_t(U_r)}\setminus U_r\not=\varnothing$, then there exists a sequence $\{x_k\}$ in $U_r$ with $y_k=\Phi_t(x_k)$ such that $\lim\limits_{k\to\infty}y_k=z\in\overline{\Phi_t(U_r)}\setminus U_r$. Thus, we must have $v_c(z)=r$. Since $r<v_c(x_k)<v_c(y_k)\to v_c(z)=r$, we have
\begin{equation}\label{eq:u_x_n_y_n}
	\lim_{k\to\infty}(v_c(y_k)-v_c(x_k))=0.
\end{equation}
Now, by Proposition \ref{properties_g_c}, we denote by $\mathbf{x}_k:[0,+\infty)\to\R^n$ the global generalized characteristic starting from $x_k$ for each $k\in\N$, and each $\mathbf{x}_k$ has the property that $\dot{\mathbf{x}}_k^+(s)$ exists for all $s\in[0,+\infty)$ and $\dot{\mathbf{x}}_k^+(s)=A(\mathbf{x}_k(s))p_k(s)$ is right-continuous, where $p_k(s)$ is the unique point of $D^+v_c(\mathbf{x}_k(s))$ satisfying \eqref{minimality}. We also have
\begin{align*}
\frac d{ds^+}v_c(\mathbf{x}_k(s))=\langle p_k(s),A(\mathbf{x}(s))p_k(s)\rangle.
\end{align*}
Therefore, by H\"older's inequality and the equality above, we have
\begin{align*}
	v_c(y_k)-v_c(x_k)=&v_c(\mathbf{x}_k(t))-v_c(\mathbf{x}_k(0))=\int^t_0\langle p_k(s),A(\mathbf{x}_k(s))p_k(s)\rangle\ ds\\
	=&\int^t_0\langle A^{-1}(\mathbf{x}_k(s))\dot{\mathbf{x}}_k^+(s),\dot{\mathbf{x}}_k^+(s)\rangle\ ds\geqslant\lambda\int^t_0|\dot{\mathbf{x}}_k^+(s)|^2\ ds\\
	\geqslant&\frac{\lambda}t\Big(\int^t_0|\dot{\mathbf{x}}_k^+(s)|\ ds\Big)^2\geqslant\frac{\lambda}t|\mathbf{x}_k(t)-\mathbf{x}_k(0)|^2=\frac{\lambda}t|y_k-x_k|^2.
\end{align*}
It follows that $\lim_{k\to\infty}x_k=z$ by \eqref{eq:u_x_n_y_n} and $\Phi_t(z)=z$. Since $r$ is a regular value of $v_c$, then  $r=v_c(z)<v_c(\Phi_t(z))$. This leads to a contradiction.
\end{proof}

For a continuous map $f: X\to X$ of the topological space $X$, a point $x\in X$ is called {\em non-wandering} if for any open set $U$ containing $x$, there is an integer $k\geq 1$ such that $f^k(U)\cap U\neq\emptyset$. The set of all non-wandering points of $f$ is denoted by $NW(f)$. We also need the standard concept of topological entropy for (semi-)dynamical systems (see. for instance, \cite{Katok})

\begin{Cor}
The time one map of the generalized characteristics semiflow $\Phi_t(\cdot)$ has zero topological entropy, i.e., $h_{top}(\Phi_1)=0$.
\end{Cor}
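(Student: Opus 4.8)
\emph{Proof proposal.} The plan is to read off from Lemma~\ref{preattractor} that $\Phi_t$ is \emph{gradient-like}, to transport the problem to the compact torus, and there to run the variational principle against the scalar Lyapunov function $v_c$. First I would observe that $A(\cdot)D^+v_c(\cdot)$ is $\Z^n$-periodic (since $u_c$ and $A$ are), so $\Phi_t(x+k)=\Phi_t(x)+k$ for all $k\in\Z^n$ and $\Phi_t$ descends through the covering $\pi\colon\R^n\to\T^n$ to a semiflow $\phi_t$ on $\T^n$ with $\pi\circ\Phi_1=\phi_1\circ\pi$; moreover $|\Phi_1(x)-x|=\big|\int_0^1\dot{\mathbf x}^+(s)\,ds\big|$ is bounded uniformly in $x$ because $D^+v_c$ is bounded and $A$ is continuous, so $\Phi_1$ is a lift of $\phi_1$ lying at bounded distance from the identity. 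I would then invoke the standard fact that such a lift has the same topological entropy as the map it covers ($\leqslant$ because a compact subset of $\R^n$ meets only finitely many fundamental domains, $\geqslant$ because $\pi$ is a semiconjugacy), reducing the statement to $h_{\mathrm{top}}(\phi_1)=0$ on the compact manifold $\T^n$.

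On $\T^n$ I would argue that the entropy is carried by the recurrent dynamics: by the variational principle together with Poincar\'e recurrence, $h_{\mathrm{top}}(\phi_1)=\sup_\mu h_\mu(\phi_1)$, where $\mu$ ranges over $\phi_1$-invariant Borel probabilities, each concentrated on the $\phi_1$-recurrent set. The Lyapunov structure enters through the continuous, non-negative \emph{increment function} $g(\bar x):=v_c(\Phi_1 x)-v_c(x)$, which is well defined on $\T^n$ (the $\Z^n$-translations change $v_c$ only by a constant) and, by parts (d) and (e) of Proposition~\ref{properties_g_c}, equals $\int_0^1\langle p(s),A(\mathbf x(s))p(s)\rangle\,ds\geqslant 0$, vanishing exactly at the fixed points $\mathrm{Crit}_{H^c}(u_c)$. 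If $\int g\,d\mu=0$ then $g\equiv 0$ $\mu$-a.e., so $\mu$ is supported on the fixed-point set, on which $\phi_1$ is the identity, whence $h_\mu(\phi_1)=0$.

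The delicate case — and the one I expect to be the main obstacle — is an ergodic $\mu$ with $\int g\,d\mu>0$, i.e.\ one whose lifted orbits drift to infinity in the direction of $c$; the flat case $u_c\equiv\text{const}$ (a linear flow on $\T^n$, of zero entropy) already shows this must be allowed. The crucial extra input is the quantitative estimate extracted in the proof of Lemma~\ref{preattractor}, namely $v_c(\Phi_t x)-v_c(x)\geqslant\tfrac{\lambda}{t}|\Phi_t x-x|^2$, equivalently $g(\phi_j\bar x)\geqslant\lambda\,|\Phi_{j+1}x-\Phi_j x|^2$ on each unit step, so that the total squared displacement along an orbit segment is dominated by the increment of the single scalar $v_c$. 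The strategy is to split off the one real ``drift coordinate'' produced by $v_c$ — Birkhoff's ergodic theorem makes $\tfrac1n\big(v_c(\Phi_n x)-v_c(x)\big)\to\int g\,d\mu$ — and to show that transverse to it the dynamics is still governed by the strictly increasing $v_c$ on the cover, hence supports no positive-entropy invariant subset, the displacement estimate being exactly what forbids exponential separation of nearby orbits in the transverse directions. The obstruction is that on $\T^n$ the Lyapunov function $v_c$ survives only as the closed $1$-form $c+du_c$, so one cannot simply quotient $\phi_t$ to a genuine gradient flow and quote the elementary fact that time-one maps of gradient flows on compact manifolds have zero topological entropy (a continuous strict Lyapunov function confines all non-wandering behaviour to the rest points); handling the non-compactness of the natural Lyapunov chart together with the drift is the heart of the matter, and the quantitative estimate of Lemma~\ref{preattractor} is what makes it go through.
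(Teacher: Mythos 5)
There is a genuine gap, and you point at it yourself: the case of an ergodic $\phi_1$-invariant measure $\mu$ on $\T^n$ with $\int g\,d\mu>0$ is never actually handled. What you offer there is a heuristic: the estimate $v_c(\Phi_tx)-v_c(x)\geqslant\frac{\lambda}{t}|\Phi_tx-x|^2$ controls the \emph{displacement of a single orbit} (linear growth of $v_c$ along the lifted orbit forces bounded average step length), but topological/measure-theoretic entropy is about the exponential \emph{divergence of nearby orbits}, and a bound on per-step displacement says nothing about that (an Anosov map of the torus also moves every point a bounded distance per step). So the claim that the displacement estimate ``forbids exponential separation in the transverse directions'' is unsubstantiated, and with it the whole drift case; since this case genuinely occurs (e.g.\ when $\mathrm{Crit}(v_c)=\varnothing$, as in Corollary \ref{no_critical}, the torus dynamics is recurrent and nowhere fixed), the proposal does not prove the corollary. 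The reduction step also deserves a caveat: for Bowen entropy on the non-compact space $\R^n$ the relation between $h_{top}(\Phi_1)$ and $h_{top}(\phi_1)$ for a $\Z^n$-equivariant lift is not a one-line ``standard fact'' and should be argued (only the inequality $h_{top}(\Phi_1)\leqslant h_{top}(\phi_1)$ is needed, and it is provable along the lines you sketch, but it is not free).

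The paper's proof goes the opposite way and is much shorter precisely because it \emph{stays} on $\R^n$, where the global Lyapunov function $v_c$ exists as a genuine function: it uses $h_{top}(\Phi_1)=h_{top}\big(\Phi_1|_{NW(\Phi_1)}\big)$, then shows via Lemma \ref{preattractor} (strict increase of $v_c$ along orbits through regular points) that every regular point of $v_c$ is wandering, while every critical point is a fixed point of the semiflow; hence $NW(\Phi_1)=\mathrm{Crit}(v_c)$ and $\Phi_1$ restricted to it is the identity, giving entropy zero. Your passage to $\T^n$ discards exactly this gradient-like structure — on the torus $v_c$ survives only as the closed $1$-form $c+du_c$, the non-wandering set can be all of $\T^n$, and you are led to prove the strictly stronger (and here unproven) statement $h_{top}(\phi_1)=0$. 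If you want to salvage your route, either carry out the transverse zero-entropy argument in the drift case in detail, or abandon the projection and run your $NW$/Lyapunov reasoning directly on $\R^n$ as the paper does.
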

\begin{proof}
To calculate the topological entropy $h_{top}(\Phi_1)$, we could use the following equality (see, for example \cite{Katok}): 
$$
h_{top}(\Phi_1)=h_{top}\big( \Phi_1\big|_{NW(\Phi_1)} \big).
$$
By Lemma \ref{preattractor}, one could easily verify that if $x$ is a regular point of $v_c$, then $x\notin NW(\Phi_1)$. On the other hand, if $x$ is critical point of $v_c$, $\mathbf{x}(t)\equiv x$ is the unique generalized characteristic starting from $x$, which implies that $x\in NW(\Phi_1)$. Therefore, $NW(\Phi_1)=\{x : 0\in D^+v_c(x)\}$.

Since $\Phi_1\big|_{NW(\Phi_1)}$ is exactly the identity map, we can obviously obtain that  $h_{top}(\Phi_1)=h_{top}\big( \Phi_1\big|_{NW(\Phi_1)} \big)=0$.
\end{proof}

\begin{The}\label{critical_set}
Let the following regularity condition be satisfied:\\
{\rm (R)} \ the regular values of $v_c$ are dense in $\R$.\\
Then $\mathcal{R}(\Phi_t)\subset\mbox{\rm Crit}\, (v_c)$. In particular, we have
\begin{equation}\label{eq:relation_limit_sets}
	L(\Phi_t):=\mbox{\rm cl}\, \Big(\cup\{\omega(\Phi_t,x): x\in X\}\Big)=\Omega(\Phi_t)=\mathcal{R}(\Phi_t)=\mbox{\rm Crit}\, (v_c),
\end{equation}
where $\Omega(\Phi_t)$ (resp. $\omega(\Phi_t,x)$) is the $\omega$-limit set of the semiflow $\Phi_t$ (resp. the semi-orbit $\Phi_t(x)$), and $\mathcal{R}(\Phi_t)$ is the chain-recurrent set of $\Phi_t$.
\end{The}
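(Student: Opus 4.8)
The plan is to prove the circle of inclusions
\[
\mbox{\rm Crit}\,(v_c)\ \subseteq\ L(\Phi_t)\ \subseteq\ \Omega(\Phi_t)\ \subseteq\ \mathcal{R}(\Phi_t)\ \subseteq\ \mbox{\rm Crit}\,(v_c),
\]
which forces all four sets to coincide, giving at once both $\mathcal{R}(\Phi_t)\subseteq\mbox{\rm Crit}\,(v_c)$ and the identity \eqref{eq:relation_limit_sets}. The two middle inclusions $L(\Phi_t)\subseteq\Omega(\Phi_t)\subseteq\mathcal{R}(\Phi_t)$ require no new work: the first is immediate from the definitions of the two limit sets, and the second holds for any semiflow on a metric space, since $\omega$-limit points are non-wandering and non-wandering points are chain recurrent (see \cite{Hurley}; cf.\ \cite{Conley1978} for the compact case).

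For $\mbox{\rm Crit}\,(v_c)\subseteq L(\Phi_t)$, observe that if $0\in D^+v_c(x)$ then $0\in A(x)D^+v_c(x)$ because $A(x)$ is invertible, so the constant arc $\mathbf{x}(s)\equiv x$ solves \eqref{eq:genchar}; by the uniqueness statement in Proposition \ref{properties_g_c}(a) this is the generalized characteristic issued from $x$, hence $x$ is a fixed point of $\Phi_t$, $\omega(\Phi_t,x)=\{x\}$, and $x\in L(\Phi_t)$.

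The substantive step is $\mathcal{R}(\Phi_t)\subseteq\mbox{\rm Crit}\,(v_c)$, which I would prove by contraposition, combining Hurley's theorem (Proposition \ref{Conley_Thm}) with Lemma \ref{preattractor}. Suppose $x\notin\mbox{\rm Crit}\,(v_c)$, i.e.\ $0\notin D^+v_c(x)$. Since the covector $p(0)$ selected by \eqref{minimality} belongs to $D^+v_c(x)$, it is nonzero, so $\dot{\mathbf{x}}^+(0)=A(x)p(0)\neq 0$ by invertibility of $A(x)$. Because $\dot{\mathbf{x}}^+$ is right-continuous (Proposition \ref{properties_g_c}(c)), there is $\delta>0$ with $\dot{\mathbf{x}}^+(s)\neq 0$ for all $s\in[0,\delta]$, and then Proposition \ref{properties_g_c}(e) yields $a:=v_c(x)<v_c(\Phi_\delta(x))=:b$. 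By the regularity hypothesis {\rm (R)}, choose a regular value $r\in(a,b)$. Since $v_c(\Phi_\delta(x))=b>r$, the point $\Phi_\delta(x)$ lies in the preattractor $U_r$ (Lemma \ref{preattractor}), so some point of the forward orbit of $x$ lies in $U_r$; hence $x\in\mathcal{B}(A_r)$, where $A_r$ is the attractor determined by $U_r$. On the other hand, Lemma \ref{preattractor} gives $\overline{\Phi([t,+\infty)\times U_r)}\subseteq\overline{U_r}\subseteq\{\,y:v_c(y)\geqslant r\,\}$ for every $t>0$, so $A_r\subseteq\{v_c\geqslant r\}$; as $v_c(x)=a<r$, we conclude $x\notin A_r$. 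Thus $x\in\mathcal{B}(A_r)\setminus A_r$, and Proposition \ref{Conley_Thm} forces $x\notin\mathcal{R}(\Phi_t)$, which is the contrapositive of the desired inclusion.

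The only delicate point I anticipate is the implication ``$x$ regular $\Rightarrow$ $v_c$ increases strictly along $\mathbf{x}$ over a time interval $[0,\delta]$ of positive length'': this rests on $\dot{\mathbf{x}}^+$ being genuinely right-continuous (not merely measurable), so that its nonvanishing at $s=0$ persists on a whole interval — precisely what Proposition \ref{properties_g_c}(c)--(e) supply from the mechanical structure (positive-definiteness of $A$). It is also worth recording why hypothesis {\rm (R)} cannot be omitted here: were $v_c$ to possess an interval of critical values straddling $v_c(x)$, there would be no regular value $r\in(a,b)$ available as an ``escape level'', and the passage through Lemma \ref{preattractor} would collapse.
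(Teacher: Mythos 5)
Your proof is correct and follows essentially the same route as the paper: strict increase of $v_c$ along the characteristic issued from a regular point, condition {\rm (R)} to select a regular value $r$, Lemma \ref{preattractor} to make $U_r$ a preattractor, and Hurley's version of Conley's theorem (Proposition \ref{Conley_Thm}) to exclude chain recurrence, together with the standard inclusions $L\subset\Omega\subset\mathcal{R}$ and the fixed-point observation at critical points. The only difference is that you spell out two details the paper leaves implicit --- the right-continuity argument giving $v_c(\Phi_\delta(x))>v_c(x)$ and the verification $A_r\subset\{v_c\geqslant r\}$ ensuring $x\notin A_r$ --- which is a welcome clarification rather than a new approach.
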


\begin{Rem}
	It is possible that the above various invariant sets can be empty simultaneously (See the  examples below).
\end{Rem}

\begin{proof}
	First, it is clear by Proposition \ref{Conley_Thm} that $x\in X$ is not a chain recurrent point of $\Phi$ if and only if there exists a preattractor $U$ which determines a attractor $A$ such that there exists $T\geqslant 0$, $\Phi_{T}(x)\in U$ but $x\not\in A$.
	
	Now let us prove $\mathcal{R}(\Phi_t)\subset\{x : 0\in D^+v_c(x)\}$.  If $0\notin D^+v_c(x)$, then $x$ is a regular point of $v_c$, hence for any $t>0$, $v_c(\Phi_t(x))>v_c(x)$. Since condition (R), there exists a regular value  $r=r(t)$ such that $v_c(\Phi_t(x))>r>v_c(x)$, then $U_r$ is a preattracttor by Lemma \ref{preattractor} and $\Phi_t(x)\in U_r$, but $x\not\in U_r$. This implies that $x$ is not a chain recurrent point of $\Phi_t$. 
	
	To prove \eqref{eq:relation_limit_sets}, we observe the following relations of relevant invariant sets is standard:
	$$
	L(\Phi_t)\subset\Omega(\Phi_t)\subset\mathcal{R}(\Phi_t).
	$$
	Moreover, if $0\in D^+ v_c(x)$, $\mathbf{x}(t)\equiv x$ is the unique generalized characteristic starting from $x$, then $x\in L(\Phi_t)$.
\end{proof}

\begin{ex}
	Let the Hamiltonian $H$ in \eqref{eq:mech_intro} be a nearly-integrable systems, i.e. 
	\begin{equation}\label{eq:nearly_integrable}
		H(x,p)=\frac 12\langle A(x)p,p\rangle+\varepsilon V(x),\quad x\in\T^n, p\in\R^n,
	\end{equation}
	where $\varepsilon>0$ is small. Then, it is not hard to be checked that for any fixed $0\not=c\in\R^n$, there exists $\varepsilon_0=\varepsilon_0(c)>0$ such that the Lipschitz constants of the visocisty solutions $u^{\varepsilon}_c$ are bounded by $D\sqrt{\varepsilon}$ for any $\varepsilon\in(0,\varepsilon_0)$, where $D=D(c)>0$ is independent of $c$ (see, e.g., \cite[Theorem 1.1]{Chen-Zhou} or \cite{Bernard2000}).  This implies that there is no critical point of $v_c$ if $0<\varepsilon<\kappa(c)=\min\{\varepsilon_0(c),\frac{|c|^2}{D(c)^2}\}$. Therefore, we have the following result:
\end{ex}

\begin{Cor}\label{no_critical}
Let $H$ be in the form as \eqref{eq:nearly_integrable} and $c\not=0$, then there exists a constant $\kappa(c)>0$ such that the set $\mbox{\rm Crit}\, (v_c)=\varnothing$ if $0<\varepsilon<\kappa(c)$.
\end{Cor}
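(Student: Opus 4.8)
The plan is to turn the emptiness of $\mathrm{Crit}\,(v_c)$ into a purely quantitative statement about the size of the superdifferential of the weak KAM solution, and then close the argument with the a priori estimate recalled in the Example. First I would record the elementary reformulation already implicit in \eqref{cri_v}: since $v_c(y)=\langle c,y\rangle+u^\varepsilon_c(y)$ on $\R^n$ and $y\mapsto\langle c,y\rangle$ is smooth with gradient $c$, one has $D^+v_c(x)=c+D^+u^\varepsilon_c(x)$, so by \eqref{critical_v_c} a point $x$ belongs to $\mathrm{Crit}\,(v_c)$ if and only if $-c\in D^+u^\varepsilon_c(x)$. Hence it suffices to produce $\kappa(c)>0$ such that, for $0<\varepsilon<\kappa(c)$, the vector $-c$ lies in no superdifferential $D^+u^\varepsilon_c(x)$ as $x$ ranges over $\R^n$.

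Next I would combine two facts. On the one hand, for any Lipschitz function $w$ every element of $D^+w(x)$ has Euclidean norm at most $\mathrm{Lip}(w)$ (test the superdifferential inequality along the direction $-p/|p|$ and use the Lipschitz bound); applied to $w=u^\varepsilon_c$ this gives $\sup_{x\in\R^n}\sup_{p\in D^+u^\varepsilon_c(x)}|p|\leqslant\mathrm{Lip}(u^\varepsilon_c)$. On the other hand, the a priori bound quoted in the Example (see \cite[Theorem 1.1]{Chen-Zhou} and \cite{Bernard2000}) furnishes $\varepsilon_0(c)>0$ and a constant $D=D(c)>0$ with $\mathrm{Lip}(u^\varepsilon_c)\leqslant D\sqrt{\varepsilon}$ for all $\varepsilon\in(0,\varepsilon_0(c))$. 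Therefore, whenever $\varepsilon<\varepsilon_0(c)$ and moreover $D\sqrt{\varepsilon}<|c|$, i.e.\ $\varepsilon<|c|^2/D^2$, every $p\in D^+u^\varepsilon_c(x)$ satisfies $|p|\leqslant D\sqrt{\varepsilon}<|c|=|{-c}|$, so $-c\notin D^+u^\varepsilon_c(x)$ for every $x\in\R^n$. Setting $\kappa(c):=\min\{\varepsilon_0(c),\,|c|^2/D(c)^2\}>0$ then gives $\mathrm{Crit}\,(v_c)=\varnothing$ for $0<\varepsilon<\kappa(c)$, which is the assertion. I would also point out that in this regime every real number is trivially a regular value of $v_c$, so condition (R) of Theorem~\ref{critical_set} holds automatically and consequently $L(\Phi_t)=\Omega(\Phi_t)=\mathcal{R}(\Phi_t)=\varnothing$, illustrating the Remark that follows that theorem.

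The only non-formal ingredient is the square-root-in-$\varepsilon$ bound on $\mathrm{Lip}(u^\varepsilon_c)$; everything else is bookkeeping with superdifferentials, and the comparison $D\sqrt{\varepsilon}<|c|$ then forces the conclusion with nothing left to check. So the main point — and the only place where the near-integrable structure \eqref{eq:nearly_integrable} and the normalization $\max_x V(x)=0$ are used — is precisely this estimate; besides citing \cite{Chen-Zhou,Bernard2000}, it can alternatively be obtained directly from \eqref{intr_HJ} together with the continuous dependence of $\alpha_H(c)$ and of the weak KAM solution on the parameter $\varepsilon$, but for the purposes of this Corollary it is cleanest to invoke the cited results as a black box.
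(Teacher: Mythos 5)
Your argument is correct and coincides with the paper's own reasoning: the paper establishes the corollary exactly by combining the bound $\mathrm{Lip}(u^{\varepsilon}_c)\leqslant D\sqrt{\varepsilon}$ from \cite{Chen-Zhou,Bernard2000} with the observation that a critical point of $v_c$ would force $-c\in D^+u^{\varepsilon}_c(x)$, which is impossible once $D\sqrt{\varepsilon}<|c|$, yielding the same $\kappa(c)=\min\{\varepsilon_0(c),|c|^2/D(c)^2\}$. Your extra details (the superdifferential--Lipschitz bound and the consequence for condition (R) and the limit sets) are fine but do not change the route.
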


Condition (R) in Proposition \ref{critical_set} is essential. It is closely connected to the Morse-Sard's type results, see, for instance, \cite{Rifford} for the viscosity solutions and \cite{Ferry} for the distance functions, also \cite{Fathi-Figalli-Rifford}, \cite{Bernard2010}, \cite{Sorrentino2008}, \cite{Mather2003} and \cite{Mather2004} for the related results on the quotient Aubry sets.

\begin{Pro}[\cite{Rifford}]\label{Rifford}
Let $H$ be a Hamiltonian in the form as \eqref{eq:mech_intro}, then Condition {\rm (R)} in Proposition \ref{critical_set} is satisfied if any of the following two conditions is satisfied
\begin{enumerate}[\rm (1)]
  \item $n=2$.
  \item $n=3$, $H$ is of class $C^4$ and $c=0$;
  \item $n=3$, $H$ is real-analytic.
\end{enumerate}
\end{Pro}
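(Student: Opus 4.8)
The plan is to prove the (a priori stronger) statement that the set of critical values $\mathcal{C}:=v_c(\mathrm{Crit}\,(v_c))$ has Lebesgue measure zero; since a null subset of $\R$ has empty interior, its complement---the set of regular values---is then dense. Because $\mathrm{Crit}\,(v_c)$ is $\Z^n$--periodic and $v_c(x+k)=v_c(x)+\langle c,k\rangle$, the set $\mathcal{C}$ is a countable union of translates of the compact set $v_c(\mathrm{Crit}\,(v_c)\cap[0,1]^n)$, so it suffices to show the latter is null. Write $\mathrm{Crit}\,(v_c)=D\cup S$, where $D$ is the set of differentiability points of $v_c$ at which $Dv_c=0$ and $S=\mathrm{Crit}\,(v_c)\cap\mathrm{Sing}\,(v_c)$; note that $D=\varnothing$ whenever $\alpha_H(c)>0$, by Proposition~\ref{properties_g_c}(g).

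The singular part $S$ is where the dimension and regularity hypotheses enter. The main tool is the fine structure of the singular set of a semiconcave function: $\mathrm{Sing}\,(v_c)=\bigcup_{k=1}^{n}\Sigma^k$, where $\Sigma^k=\{x:\dim D^+v_c(x)=k\}$ is countably $\mathcal{H}^{n-k}$--rectifiable and, away from an $\mathcal{H}^{n-k}$--null set, is contained in $C^1$ submanifolds of dimension $n-k$ (Alberti--Ambrosio--Cannarsa). One shows that $v_c$ is tangentially differentiable along $\Sigma^k$, its tangential derivative at $x\in\Sigma^k$ being the orthogonal projection onto $T_x\Sigma^k$ of an arbitrary $p\in D^+v_c(x)$ (this uses semiconcavity and the orthogonality of $T_x\Sigma^k$ to the affine hull of $D^+v_c(x)$). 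Hence, if $0\in D^+v_c(x)$ then $x$ is a critical point of the restriction $v_c|_{\Sigma^k}$, so $v_c(S)\subset\bigcup_k v_c(\mathrm{Crit}(v_c|_{\Sigma^k}))$, and it remains to apply a Morse--Sard theorem stratum by stratum. If $n=2$, every $\Sigma^k$ with $k\ge1$ has dimension at most $1$; since $v_c$ is globally Lipschitz, $v_c$ restricted to a $C^1$ curve is Lipschitz, and the image of the critical set of a Lipschitz function of one variable is Lebesgue null---this already gives $|v_c(S)|=0$. If $n=3$, the strata $\Sigma^2,\Sigma^3$ are handled in the same way, while the two--dimensional stratum $\Sigma^1$ requires the extra hypotheses: when $H\in C^4$ with $c=0$ (resp. $H$ real--analytic), one propagates the smoothness of $H$ along characteristics to show that $\Sigma^1$ is, off a lower--dimensional set, a $C^2$ (resp. analytic) hypersurface and that $v_c|_{\Sigma^1}$ is of class $C^2$ (resp. analytic), so that the classical Morse--Sard theorem for $C^2$ maps of a $2$--manifold to $\R$ (resp. the analytic Sard theorem, which imposes no dimension restriction) applies.

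For the differentiability part $D$, inserting $Dv_c(x)=0$ into the equation $\tfrac12\langle A(x)Dv_c,Dv_c\rangle+V=\alpha_H(c)$ gives $V(x)=\alpha_H(c)$ for $x\in D$; since the quadratic term $\tfrac12\langle ADv_c,Dv_c\rangle$ is nonnegative, $V\le\alpha_H(c)$ holds on all of $\T^n$, and as $\max V=0$ one deduces that $\alpha_H(c)=0$ and that $x$ is a global maximum of $V$, so $\nabla V(x)=0$ and $D\subset V^{-1}(\max V)$. Because $v_c$ has vanishing gradient at every point of $D$, it is constant along any Lipschitz path contained in $D$; combining this with the stratification of the critical level set $V^{-1}(\max V)$ afforded by Sard's theorem for $V$ (which is at least as smooth as $H$), one obtains $|v_c(D)|=0$ in the admissible dimensions. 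Incidentally, the backward calibrated curve through $x\in D$ is then the stationary one at the equilibrium $(x,0)$, so $D\subset\mathcal{I}_{u_c}$.

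I expect the crux to be the two--dimensional stratum $\Sigma^1$ when $n=3$: a priori $v_c$ is only semiconcave, so $v_c|_{\Sigma^1}$ is merely Lipschitz on a $C^1$ surface, and Lipschitz---indeed $C^1$---functions on a $2$--manifold can violate Morse--Sard (Whitney's example). The whole weight of the $C^4$/analyticity hypotheses, and of the dimension restriction, lies in upgrading the regularity of $\Sigma^1$ and of $v_c$ along it, which demands a delicate analysis of how the smoothness of $H$ propagates along the generalized characteristics near the singular set; in the analytic case one bypasses this by invoking that analytic maps satisfy Sard's theorem in every dimension. By contrast the case $n=2$ is soft, the entire argument collapsing to Morse--Sard for Lipschitz functions of one variable.
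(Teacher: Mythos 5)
The paper offers no proof of Proposition \ref{Rifford}: it is quoted verbatim from Rifford's article, so the only honest comparison is with the cited literature rather than with an in-paper argument. Your overall architecture (prove the stronger statement that the critical values form a null set; reduce by periodicity to a fundamental domain; split $\mathrm{Crit}(v_c)$ into the singular part $S$ and the part $D$ of differentiability points with vanishing gradient; treat $S$ via the rectifiability of the singular set of a semiconcave function plus one-dimensional Sard) is the right kind of strategy, and your treatment of $S$ when $n=2$ is essentially sound: if $0\in D^+v_c(x)$ and $\gamma$ is a Lipschitz curve in the singular set through $x$, semiconcavity with $p=0$ gives $v_c(\gamma(t+h))-v_c(\gamma(t))\leqslant O(h^2)$, so $(v_c\circ\gamma)'=0$ wherever it exists, and covering $\mathrm{Sing}(v_c)$ by countably many Lipschitz curves finishes that stratum.

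There are, however, two genuine gaps. First, the part $D$ is not actually handled. Sard's theorem for $V$ controls the critical \emph{values} of $V$, not the structure of the single level set $V^{-1}(\max V)$, which can be an arbitrary compact set (even of positive measure, e.g.\ $V\equiv 0$ on an open set), and it provides no ``stratification'' of it; constancy of $v_c$ along Lipschitz paths inside $D$ says nothing unless such paths exist and connect $D$. What has to be shown is that $v_c(D)$ is Lebesgue null, i.e.\ a statement about the values of a weak KAM solution on a subset of the projected Aubry set (your own remark $D\subset\mathcal{I}_{u_c}$ points there). This is precisely the quotient Aubry set problem the paper alludes to just before the proposition (Mather, Fathi--Figalli--Rifford, Sorrentino): it is where the hypotheses $n\leqslant 3$, $C^4$, $c=0$ or analyticity genuinely enter, it is known to fail in general without them, and your sketch never uses $c=0$ at all, even though it appears in item (2) of the statement --- a strong sign that the weight of the hypotheses is misplaced in your account. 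Second, for $n=3$ the claimed upgrade of the two-dimensional stratum $\Sigma^1$ and of $v_c|_{\Sigma^1}$ to $C^2$ (resp.\ analytic) regularity ``off a lower-dimensional set'' is asserted, not proved. Near a singular point where exactly two non-conjugate characteristics arrive this is fine, since locally $\Sigma^1=\{u_1=u_2\}$ for two smooth branches with distinct gradients; but the entire difficulty is the complementary set of non-clean singular and conjugate points, and you give no argument that it is negligible for the Sard count. As it stands, cases (2) and (3) are not established, and the result cannot be recovered from the sketch without importing the substantial machinery of the cited works.
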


However, for $n=3$, the smoothness requirements of $H$ in Proposition \ref{Rifford} are restrictive. In fact, \cite{Rifford} has proven that if $n=3$, the condition {\rm (R)} holds for generic $C^2$ Hamiltonians: let the Hamiltonian $H$ in \eqref{eq:mech_intro} be $C^2$ and $c\in\R^n$, then there exists an open dense set $\mathcal{O}\subset C^2(\T^n,\R)$, such that for every function $P\in\mathcal{O}$, the corresponding $v_c$ of $H+P$ satisfies the condition {\rm (R)}. 

\begin{ex}
	As shown in Corollary \ref{no_critical}, the set $\mbox{\rm Crit}\, (v_c)$ can be empty for  nearly-integrable systems. For another example, the mathematical pendulum $H(x,p)=\frac{1}{2}p^2+\cos 2\pi x-1$, we have: If $c\in[-\frac{4}{\pi},\frac{4}{\pi}]$, then $\alpha(c)=0$ and the set $\mbox{\rm Crit}\, (v_c)$ must be nonempty; If $c\notin[-\frac{4}{\pi},\frac{4}{\pi}]$, then  $\alpha(c)>0$ and the set $\mbox{\rm Crit}\, (v_c)$ must be empty. 
\end{ex}


It is still unclear what is the essential condition to ensure the existence or nonexistence of critical points for $v_c$. However we have the following results.

\begin{The}\label{th:critical1}
If $S$ is a bounded connected component of $\mbox{\rm Sing}\, (v_c)$, then $S\cap \mbox{\rm Crit}\, (v_c)\neq\emptyset$.
\end{The}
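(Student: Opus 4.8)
The plan is to argue by contradiction, exploiting the monotonicity of $v_c$ along generalized characteristics (Proposition \ref{properties_g_c}(e)) together with the fact that a bounded connected component of the singular set, being contained in a compact set, must trap any orbit that stays singular.

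\textbf{Setup and the contradiction hypothesis.} Suppose $S$ is a bounded connected component of $\mbox{\rm Sing}\,(v_c)$ with $S\cap\mbox{\rm Crit}\,(v_c)=\emptyset$. First I would note that $\overline{S}$ is compact and is itself a connected component of $\mbox{\rm Sing}\,(v_c)$ (the singular set is relatively closed away from differentiability points; more carefully, one uses that $\overline{S}\setminus S$ consists of points where $v_c$ is differentiable — this needs a short argument, or one works with $\overline{S}$ and notes any point of $\overline S$ in $\mbox{\rm Crit}\,(v_c)$ would already give the conclusion, so WLOG $\overline{S}\cap\mbox{\rm Crit}\,(v_c)=\emptyset$). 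Pick $x_0\in S$. By Proposition \ref{properties_g_c}(a),(b) the generalized characteristic $\mathbf{x}(\cdot)=\Phi_\cdot(x_0)$ starting at $x_0$ satisfies $\mathbf{x}(s)\in\mbox{\rm Sing}\,(v_c)$ for all $s\geqslant 0$; since $\mathbf{x}$ is continuous and $\mathbf{x}(0)=x_0\in S$, and $S$ is a connected component, in fact $\mathbf{x}(s)\in S$ for all $s\geqslant 0$. Hence the forward orbit stays in the compact set $\overline{S}$.

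\textbf{Monotonicity forces a limit point at which $\dot{\mathbf x}^+$ vanishes.} Because no point of $\overline S$ is critical, by \eqref{cri_v} we have $0\notin D^+v_c(\mathbf x(s))$ for every $s$, so by Proposition \ref{properties_g_c}(c) the minimal selection $p(s)\in D^+v_c(\mathbf x(s))$ is nonzero and $\dot{\mathbf x}^+(s)=A(\mathbf x(s))p(s)\neq 0$ for all $s\geqslant 0$. Then Proposition \ref{properties_g_c}(e) (or (d)) gives that $t\mapsto v_c(\mathbf x(t))$ is strictly increasing; since $\mathbf x(t)$ lies in the compact set $\overline S$ on which the continuous function $v_c$ is bounded, $v_c(\mathbf x(t))$ increases to a finite limit $\ell$. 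Now take a sequence $t_k\to\infty$ with $\mathbf x(t_k)\to z\in\overline S$ (compactness); then $v_c(z)=\ell$. The key quantitative estimate, exactly as in the proof of Lemma \ref{preattractor}, is
\begin{equation*}
v_c(\mathbf x(t_k+1))-v_c(\mathbf x(t_k))=\int_0^1\langle A^{-1}(\mathbf x(t_k+s))\dot{\mathbf x}^+(t_k+s),\dot{\mathbf x}^+(t_k+s)\rangle\,ds\geqslant \lambda\int_0^1|\dot{\mathbf x}^+(t_k+s)|^2\,ds,
\end{equation*}
where $\lambda>0$ is a uniform lower bound for the eigenvalues of $A^{-1}$ on $\overline S$. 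The left side tends to $\ell-\ell=0$, and arguing with Jensen/Hölder as in Lemma \ref{preattractor} we get $|\mathbf x(t_k+1)-\mathbf x(t_k)|\to 0$ and moreover that the integrand must collapse; combined with semicontinuity of $D^+v_c$ and the continuity of the minimal-selection map, one deduces that the generalized characteristic through $z$ is stationary, i.e. $\dot{\mathbf z}^+(0)=0$, which by Proposition \ref{properties_g_c}(c) and positivity of $A$ forces $0\in D^+v_c(z)$, i.e. $z\in\mbox{\rm Crit}\,(v_c)$.

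\textbf{Conclusion and the main obstacle.} Since $z\in\overline S$ and $z\in\mbox{\rm Crit}\,(v_c)$, this contradicts $\overline S\cap\mbox{\rm Crit}\,(v_c)=\emptyset$ (and after dealing with the boundary reduction, contradicts $S\cap\mbox{\rm Crit}\,(v_c)=\emptyset$), proving the theorem. The step I expect to be most delicate is the passage from ``$v_c(\mathbf x(t_k+1))-v_c(\mathbf x(t_k))\to 0$'' to the existence of an actual critical point $z$: one must upgrade the vanishing of a time-averaged quantity to the vanishing of $\dot{\mathbf x}^+$ at a genuine limit point, which requires the continuity/right-continuity properties of $\dot{\mathbf x}^+$ in Proposition \ref{properties_g_c}(c) and the upper semicontinuity of $x\mapsto D^+v_c(x)$ with respect to the minimal selection \eqref{minimality} — i.e. showing the selection map $z'\mapsto p(z')$ behaves well enough under limits $\mathbf x(t_k)\to z$. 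A clean way to package this is to invoke the semiflow continuity from Proposition \ref{properties_g_c}(a): $\Phi_s(\mathbf x(t_k))\to\Phi_s(z)$ uniformly for $s\in[0,1]$, so $v_c(\Phi_1(z))-v_c(z)=\lim_k\big(v_c(\mathbf x(t_k+1))-v_c(\mathbf x(t_k))\big)=0$, whence $\Phi_s(z)\equiv z$ by Proposition \ref{properties_g_c}(e) and thus $z\in\mbox{\rm Crit}\,(v_c)$.
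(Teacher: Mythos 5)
Your proof is correct and rests on the same mechanism as the paper's: the singular characteristic from a point of $S$ stays in $S$, the formula $\frac{d}{ds^+}v_c(\mathbf{x}(s))=\langle p(s),A(\mathbf{x}(s))p(s)\rangle\geqslant 0$ makes $v_c$ nondecreasing along it, and boundedness of $v_c$ on the bounded set $S$ forces the speed to vanish asymptotically, whence compactness yields a critical limit point. The packaging differs in two ways. First, the paper argues directly rather than by contradiction: if no $\mathbf{x}(t)$ is critical, boundedness of $v_c$ on $S$ gives $\inf_{s\geqslant0}\langle p(s),A(\mathbf{x}(s))p(s)\rangle=0$, and along a sequence $s_j$ realizing this one uses the upper semicontinuity of $x\rightrightarrows D^+v_c(x)$ to get $p(s_j)\to\bar p\in D^+v_c(\bar x)$ with $\langle\bar p,A(\bar x)\bar p\rangle=0$, hence $\bar p=0$; your detour through the Lipschitz dependence of the semiflow and $v_c(\Phi_1(z))=v_c(z)$ is a valid substitute, though Proposition \ref{properties_g_c}(e) literally produces a time $s_0$ with vanishing right derivative, i.e.\ a critical point at $\Phi_{s_0}(z)\in\overline{\{\mathbf{x}(t):t>0\}}$, not $\Phi_s(z)\equiv z$ outright; it is cleaner to use (d): constancy of $v_c\circ\Phi_\cdot(z)$ on $[0,1]$ forces $p(s)\equiv0$, in particular $0\in D^+v_c(z)$. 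Second, the boundary reduction you worry about does not appear in the paper, which treats $S$ as compact at this step so that the limit point lands in $S$ directly; if you work with $\overline S$, your remark that a critical point in $\overline S$ ``already gives the conclusion'' does need the short argument you allude to --- for instance, when $\alpha(c)>0$, Proposition \ref{properties_g_c}(g) makes any such point singular, and then maximality of the connected component $S$ of $\mbox{\rm Sing}\,(v_c)$ forces it to lie in $S$ --- so that is the one spot to tighten, and it is no harder than what the paper itself implicitly assumes.
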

\begin{proof}
Fix  $x_0\in S\cap\mbox{\rm Sing}\, (v_c)$ and let $\mathbf{x}:[0,+\infty)\to\R^n$ be the generalized singular characteristic with initial point $x_0$. Then $\mathbf{x}(t)\in S$ for all $t\in [0,+\infty)$ and, by Proposition \ref{properties_g_c},
\begin{equation}
\label{eq:increase_of_v}
v_c(\mathbf{x}(t))-v_c(x_0)=\int_0^t\langle p(s),A(\mathbf{x}(s))p(s)\rangle\,ds\quad\forall t\geqslant 0
\end{equation}
where $p(\cdot)$ satisfies \eqref{minimality}.

If, for some $t\geqslant 0$, $\mathbf{x}(t)$ is  critical for $v_c$, then the conclusion is proved. 

Suppose next that $\mathbf{x}(t)$ is not critical for $v_c$ for all $t\geqslant 0$. Then we claim that
\begin{equation}
\label{eq:abs-abs}
\lim_{j\to\infty}\langle p(s_j),A(\mathbf{x}(s_j))p(s_j)\rangle=0
\end{equation}
for some sequence $s_j\geqslant 0$. 
For if  
\begin{equation*}
\delta:=\inf_{t\geqslant 0} \langle p(s),A(\mathbf{x}(s))p(s)\rangle>0\,,
\end{equation*}
then, appealing to \eqref{eq:increase_of_v}, we obtain
 \begin{equation*}
v_c(\mathbf{x}(t))-v_c(x_0)\geqslant \delta t \quad\forall t\geqslant 0\,,
\end{equation*}
in contrast with the fact that $v_c$ is bounded on $S$. So, \eqref{eq:abs-abs} is proved.

Now, since $S$ is compact and the set-valued map $x\rightrightarrows D^+v_c(x)$ is upper semicontinuous, 
we can also assume that $\mathbf{x}(s_j)\to \bar x\in S$  and
 $p(s_j)\to \bar p\in D^+v_c(\bar x)$ as $j\to\infty$. Thus, 
 $\langle \bar p,A(\bar x)\bar p\rangle=0$ by \eqref{eq:abs-abs}.
 Since  $A(\bar x)$ is positive definite, this implies that $\bar p=0$ and $\bar x$ is a critical point of $v_c$. 
\end{proof}

%
%

\section{Singularities may approach Aubry sets}

In  Section 2, we have already shown that, if a certain Morse-Sard type condition (R) is satisfied,  the generalized characteristics semiflows $\Phi_t$ on $\R^n$ is essentially a gradient-like flow. But since we lift the solutions to the universal covering space $\R^n$, which is not compact, we cannot obtain the asymptotic results by just studying the $\omega$-limit set of a given generalized characteristic.  So we have to return to the torus and begin to study the asymptotic behavior of the generalized characteristics semiflow on $\T^n$.


In this section, we still suppose that $H(x,p):\T^n\times\R^n\to\R$ has the form as in \eqref{eq:mech_intro}. Recall that for any $c\in\R^n$, $H^c=H(x,c+p)-\alpha_H(c)$ and the associated Lagrangian 
$$
L^c(x,v)=\frac{1}{2}\langle A^{-1}(x)v,v\rangle -V(x)-\langle c,v\rangle+\alpha_H(c).
$$
It is clear that the Ma\~n\'e's critical value with respect to $H^c$ is $0$. 

Similar to the last section, there is a semiflow $\phi_t$ defined on the torus $\T^n$ such that
\begin{equation}\label{semiflow_cmpt}
	\phi_t(x):[0,+\infty)\times\T^n\to\T^n \quad\text{and}\quad \phi_t\circ \pi=\pi\circ\Phi_t,
\end{equation}
where $\pi:\R^n\to\T^n$ is the canonical projection and $\Phi_t$ is defined in \eqref{semiflow_noncmpt}. By the the periodicity of $H$, 
it is easy to find that for any $x\in\T^n$, $\phi_t(x)$ is the unique generalized characteristic of the  differential inclusion
\begin{equation}\label{eq:genchar1}
\begin{cases}
\dot{\mathbf{x}}(s)\in A\big(\mathbf{x}(s)\big)\big(c+D^+u_c(\mathbf{x}(s))\big), 
 &
 \mbox{a.e.}\,\, s
 \vspace{.1cm}
 \\
 \mathbf{x}(0)=x.
 &
\end{cases}
\end{equation}

Therefore, it follows from Proposition \ref{properties_g_c} and the formula \eqref{intro:v} that
\begin{Pro}\label{properties_g}
Let $H$ , $c\in\R^n$ and  $u_c$  be described as above.  Fix any $x\in\T^n$.
\begin{enumerate}[\rm (a)]
  \item There is a unique Lipschitz arc $\mathbf{x}:[0,+\infty)\to\T^n$ satisfying \eqref{eq:genchar1}. Indeed, this arc $$\mathbf{x}(t)=\phi_t(x), \quad\text{for all~} t\geqslant 0.$$  If $\mathbf{x}, \mathbf{y}$ are two solutions  of \eqref{eq:genchar1} starting from  $x, y\in\T^n$ respectively, then
	$$
	|\mathbf{x}(s)-\mathbf{y}(s)|\leqslant C|x-y|, \quad s\in[0,\tau]
	$$
for some positive constant $C$ depending on $\tau>0$.
\item If $x\in\mbox{\rm Sing}\, (u_c)$, then $\mathbf{x}(s)\in\mbox{\rm Sing}\, (u_c)$ for all $s\in [0,+\infty)$.	
   \item The right derivative $\dot{\mathbf{x}}^+(s)$ exists for all $s\in[0,+\infty)$ and
\begin{equation*}
\dot{\mathbf{x}}^+(s)=A\big(\mathbf{x}(s)\big)\big(c+p(s)\big),\qquad\forall s\in[0,+\infty),
\end{equation*}
where $p(s)$ is the unique point of $D^+u_c(\mathbf{x}(s))$ such that
\begin{equation}
\langle A\big(\mathbf{x}(s)\big)\big(c+p(s)\big),\big(c+p(s)\big)\rangle=\min_{p\in D^+u_c(\mathbf{x}(s))}\langle A\big(\mathbf{x}(s)\big)\big(c+p\big),\big(c+p\big)\rangle.
\end{equation}
Moreover, $\dot{\mathbf{x}}^+(s)$ is right-continuous.
   \item If $x\notin \mbox{\rm Crit}_{H^c}(u_c)$, then $\mathbf{x}$ is injective on $[0,\tau]$ for some $\tau>0$.
   \item If $\alpha(c)>0$, then the set  $\mbox{\rm Crit}_{H^c}(u_c)$ $\subset$ $\mbox{\rm Sing}\,(u_c)$. 
 \end{enumerate}
\end{Pro}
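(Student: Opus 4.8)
The plan is to deduce every assertion from its $\R^n$-counterpart in Proposition~\ref{properties_g_c} by lifting to the universal cover, since the pair $\{H,v_c\}$ on $\R^n$ and the pair $\{H^c,u_c\}$ on $\T^n$ describe the same object. Two purely algebraic identities make this work. First, because $v_c=\langle c,\cdot\rangle+u_c$ on $\R^n$ by \eqref{intro:v} and $\langle c,\cdot\rangle$ is smooth, we have $D^+v_c(x)=c+D^+u_c(x)$ for every $x\in\R^n$; in particular $v_c$ is differentiable at $x$ iff $u_c$ is, so $\mbox{\rm Sing}\,(v_c)=\pi^{-1}(\mbox{\rm Sing}\,(u_c))$, and comparing \eqref{cri_v} with \eqref{critical_v_c} gives $\mbox{\rm Crit}\,(v_c)=\pi^{-1}(\mbox{\rm Crit}_{H^c}(u_c))$. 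Second, for the mechanical Hamiltonian \eqref{eq:mech_intro} one has $H_p(x,c+p)=A(x)(c+p)$, so $A(\mathbf{x})D^+v_c(\mathbf{x})=A(\mathbf{x})\big(c+D^+u_c(\mathbf{x})\big)$, and the differential inclusion \eqref{eq:genchar} is exactly the lift of \eqref{eq:genchar1}. Since $A$ and $u_c$ are $\Z^n$-periodic, the right-hand side of \eqref{eq:genchar} is invariant under integer translations, so the semiflow $\Phi_t$ commutes with the deck group and descends to $\phi_t$ on $\T^n$, which is the content of \eqref{semiflow_cmpt}.

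For part (a), given $x\in\T^n$ pick a lift $\tilde x\in\R^n$ and set $\mathbf{x}:=\pi\circ\Phi_{\cdot}(\tilde x)$; by the discussion above this is a Lipschitz solution of \eqref{eq:genchar1} and $\mathbf{x}(t)=\phi_t(x)$. For uniqueness, an arbitrary Lipschitz solution $\mathbf{y}$ of \eqref{eq:genchar1} on $\T^n$ lifts through the covering map $\pi$ to a Lipschitz arc on $\R^n$ which, by periodicity, solves \eqref{eq:genchar}; Proposition~\ref{properties_g_c}(a) then forces the lift to equal $\Phi_{\cdot}(\tilde x)$, hence $\mathbf{y}=\mathbf{x}$. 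The Lipschitz dependence estimate is local: if $x,y\in\T^n$ are closer than the injectivity radius of $\T^n$, choose lifts $\tilde x,\tilde y$ with $|\tilde x-\tilde y|$ equal to the flat distance, apply Proposition~\ref{properties_g_c}(a), and project; enlarging $C$ to absorb the range where $|x-y|$ is bounded away from $0$ (using that $\T^n$ has finite diameter) yields the stated estimate.

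Parts (b)--(e) are then immediate translations. For (b): $x\in\mbox{\rm Sing}\,(u_c)$ gives $\tilde x\in\mbox{\rm Sing}\,(v_c)$, hence $\Phi_s(\tilde x)\in\mbox{\rm Sing}\,(v_c)$ for all $s$ by Proposition~\ref{properties_g_c}(b), and projecting through $\pi^{-1}(\mbox{\rm Sing}\,(u_c))=\mbox{\rm Sing}\,(v_c)$ gives $\mathbf{x}(s)\in\mbox{\rm Sing}\,(u_c)$. For (c): $\pi$ is a local diffeomorphism, so $\dot{\mathbf{x}}^+(s)=d\pi\,\dot{\tilde{\mathbf{x}}}^+(s)$ exists and is right-continuous by Proposition~\ref{properties_g_c}(c); writing the minimizer in $D^+v_c(\tilde{\mathbf{x}}(s))=c+D^+u_c(\mathbf{x}(s))$ as $c+p(s)$ turns \eqref{minimality} into the asserted minimality condition for $p(s)\in D^+u_c(\mathbf{x}(s))$ and gives $\dot{\mathbf{x}}^+(s)=A(\mathbf{x}(s))\big(c+p(s)\big)$. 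For (d): $x\notin\mbox{\rm Crit}_{H^c}(u_c)$ means $0\notin D^+v_c(\tilde x)$ by \eqref{cri_v}, so by Proposition~\ref{properties_g_c}(f) the lift $\tilde{\mathbf{x}}$ is injective on some $[0,\tau]$; shrinking $\tau$ so that the Lipschitz arc $\tilde{\mathbf{x}}([0,\tau])$ lies in a ball of radius below the injectivity radius makes $\pi$ injective there, whence $\mathbf{x}=\pi\circ\tilde{\mathbf{x}}$ is injective on $[0,\tau]$. For (e): $\alpha(c)>0$ yields $\mbox{\rm Crit}\,(v_c)\subset\mbox{\rm Sing}\,(v_c)$ by Proposition~\ref{properties_g_c}(g), and applying $\pi$ to both sides, using the two identities above, gives $\mbox{\rm Crit}_{H^c}(u_c)\subset\mbox{\rm Sing}\,(u_c)$.

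The only genuine care is in the covering-space bookkeeping: verifying that a solution of \eqref{eq:genchar1} on $\T^n$ really lifts to a solution of \eqref{eq:genchar} on $\R^n$ (so that uniqueness can be imported from Proposition~\ref{properties_g_c}(a)), and choosing $\tau$ small enough in (d) that projection does not destroy injectivity. Beyond these routine points there is no new analytic content; the statement is essentially the observation that, under \eqref{intro:v}, the objects in Proposition~\ref{properties_g_c} pass verbatim to the torus.
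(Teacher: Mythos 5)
Your proposal is correct and follows essentially the same route as the paper: the paper also obtains Proposition \ref{properties_g} directly from Proposition \ref{properties_g_c} and the relation \eqref{intro:v}, using periodicity and the definition $\phi_t\circ\pi=\pi\circ\Phi_t$ in \eqref{semiflow_cmpt}, i.e.\ by lifting to $\R^n$, translating $D^+v_c=c+D^+u_c$, and projecting back. You merely spell out the covering-space bookkeeping (uniqueness via lifting a torus solution, choosing lifts realizing the flat distance, shrinking $\tau$ in (d)) that the paper leaves implicit.
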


Let us recall some basic facts from weak KAM theory (see, for instance, \cite{Fathi-book}). A  function $u:\T^n\to\R$ is said to be {\em dominated} by $L^c$ iff, for each absolutely continuous arc $\gamma:[a,b]\to\T^n$ with $a<b$, one has
$$
u(\gamma(b))-u(\gamma(a))\leqslant\int^b_aL^c(\gamma(s),\dot{\gamma}(s))ds.
$$
When this happens, one writes $u\prec L^c$. An absolutely continuous curve $\gamma:[a,b]\to\T^n$ is said to be $(u,L^c,0)$-{\em calibrated} if
$$
u(\gamma(b))-u(\gamma(a))=\int^b_aL^c(\gamma(s),\dot{\gamma}(s))ds.
$$
For any weak KAM solution $u_c$ of \eqref{intr_HJ}, we define 
\begin{align*}
	\mathcal{I}_{u_c}:=\{x\in\T^n: x=\gamma(0)\ \text{for some}\ (u_c,L^c,0)\text{-calibrated curve}\ \gamma:\R\to\T^n\},
\end{align*}
and
\begin{align*}
	\mathcal{A}(H^c):=\bigcap\limits_{u_c}\mathcal{I}_{u_c}.
\end{align*}
where $u_c$ is taken over all weak KAM solutions $u_c$ of \eqref{intr_HJ}. Such set $\mathcal{A}(H^c)$ is called the {\em projected Aubry set} with respect to the Hamiltonian $H^c$.


For any $t>0$, given $x, y\in\T^n$, we set
$$
\Gamma^t_{x,y}=\{{\xi\in W^{1,1}([0,t];\T^n): \xi(0)=x,\xi(t)=y}\}
$$
and define
\begin{equation}\label{fundamental_solution}
A_{t}(x,y)=\min_{\xi\in\Gamma^t_{x,y}}\int^{t}_{0}L^c(\xi(s),\dot{\xi}(s))ds.
\end{equation}
The existence of the above minimum is a well-known result in Tonelli's theory (see, for instance, \cite{Fathi-book}). Any arc $\xi\in\Gamma^t_{x,y}$ at which the minimum in \eqref{fundamental_solution} is achieved will be called a {\em minimizer} for $A_t(x,y)$. Any minimizer $\xi$ is of class $C^2$ by classical results. 

For further analysis, we  need more information about the construction of the generalized characteristics which was first discovered in \cite{Cannarsa-Cheng3} (see, also \cite{CCF}).
For any $t>0$ and any continuous function $u:\T^n\to(-\infty,+\infty)$, the {\em Lax-Oleinik operators} $T^{\pm}_tu:\T^n\to(-\infty,+\infty)$ are defined as follows:
\begin{gather}
T^+_tu(x)=\sup_{y\in\T^n}\{u(y)-A_t(x,y)\},\quad x\in\T^n,\label{L-L regularity_sup}\\
T^-_tu(x)=\inf_{y\in\T^n}\{u(y)+A_t(y,x)\},\quad x\in\T^n.\label{L-L regularity_inf}
\end{gather}

Now, let $u_c$ be a weak KAM solution of \eqref{intr_HJ} and let $A_t(x,y)$ be the fundamental solution with respect to $L^c$. By applying Lemma 3.2 in \cite{Cannarsa-Cheng3} to \eqref{intr_HJ} (see Appendix A for more details), we conclude that there exist  $t_0>0$ and $\lambda_0>0$ such that, for any $x\in\T^n$ and $t\in(0,t_0]$, the function $u_c(\cdot)-A_t(x,\cdot)$ admits a unique maximizer $y_{t,x}$, and $y_{t,x}\in B(x,\lambda_0 t)$. In addition, this function $A_t(x,\cdot)$ is differentiable at any $y\in B(x,\lambda_0 t)$. Let
\begin{equation}\label{eq:bold_y}
\mathbf{y}_x(t):=\begin{cases}
x&\mbox{if}\quad t=0,\\
y_{t,x}&\mbox{if}\quad t\in(0,t_0].
\end{cases}	
\end{equation}
Then $\mathbf{y}_x(t)\in\SING$ if $x\in\SING$. The generalized characteristics semiflow $\phi_t$ in Proposition \ref{properties_g} is  indeed constructed by
\begin{align*}
	\phi_t(x)=\mathbf{y}_x(t),  \quad t\in[0,t_0],\ x\in\T^n.
\end{align*}
and determined inductively as
\begin{equation}\label{eq:rescaled_GC}
	\phi_t(x)=\mathbf{y}_{\phi_{it_0}(x)}(t-it_0), \quad\forall t\in[it_0,(i+1)t_0],\,i\in\N.
\end{equation}  

\begin{Rem}\label{using_tau}
	Owing to the uniqueness of generalized characteristics when $H$ has the form \eqref{eq:mech_intro}, for any $0<\tau\leqslant t_0$ we also have
	\begin{equation}\label{eq:using_tau}
		\phi_t(x)=\mathbf{y}_{\phi_{i\tau}(x)}(t-it_0),\quad\forall t\in[i\tau,(i+1)\tau],\,i\in\N.
	\end{equation}
\end{Rem}

\begin{Lem}\label{limiting_GC}
	Fix $t_0>0$. If $\{x_k\}$ is a sequence which converges to $x$ as $k\to\infty$ and $\mathbf{x}$, $\mathbf{x}^{k}$ are the unique generalized characteristic defined by \eqref{eq:rescaled_GC} with initial conditions $\mathbf{x}(0)=x$ and $\mathbf{x}^k(0)=x_k$ for all $k\in\N$, respectively, then $\mathbf{x}^{k}\vert_{[0,it_0]}$ converges to $\mathbf{x}\vert_{[0,it_0]}$ uniformly for any $i\in\N$ as $k\to\infty$.
\end{Lem}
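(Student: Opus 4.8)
The plan is to argue by induction on $i\in\N$, using the inductive construction \eqref{eq:rescaled_GC} (with $t_0$ the constant supplied together with \eqref{eq:bold_y}). The base case $i=1$ amounts to the statement that the ``one-step map''
$$
\T^n\times[0,t_0]\ni(x,t)\longmapsto\mathbf{y}_x(t)\in\T^n
$$
from \eqref{eq:bold_y} is continuous; equivalently, if $x_k\to x$ then $\mathbf{y}_{x_k}(\cdot)\to\mathbf{y}_x(\cdot)$ uniformly on $[0,t_0]$. Granting this, the inductive step is immediate: if $\mathbf{x}^k\to\mathbf{x}$ uniformly on $[0,it_0]$, then in particular $\mathbf{x}^k(it_0)\to\mathbf{x}(it_0)$, and since $\mathbf{x}^k(t)=\mathbf{y}_{\mathbf{x}^k(it_0)}(t-it_0)$ and $\mathbf{x}(t)=\mathbf{y}_{\mathbf{x}(it_0)}(t-it_0)$ on $[it_0,(i+1)t_0]$ by \eqref{eq:rescaled_GC}, the base case applied with the initial points $\mathbf{x}^k(it_0)\to\mathbf{x}(it_0)$ gives uniform convergence on $[it_0,(i+1)t_0]$; concatenating with the inductive hypothesis yields uniform convergence on $[0,(i+1)t_0]$.

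To prove the base case I would first establish \emph{joint} continuity of $(x,t)\mapsto\mathbf{y}_x(t)$ on $\T^n\times[0,t_0]$. Recall, from the application of Lemma~3.2 of \cite{Cannarsa-Cheng3} quoted before \eqref{eq:bold_y}, that for $t\in(0,t_0]$ the function $y\mapsto u_c(y)-A_t(x,y)$ has a \emph{unique} maximizer $y_{t,x}$, and that $y_{t,x}\in B(x,\lambda_0 t)$. Fix $(x,t)$ with $t>0$ and a sequence $(x_k,t_k)\to(x,t)$. By the standard continuity of $(t,x,y)\mapsto A_t(x,y)$ for $t$ bounded away from $0$, together with compactness of $\T^n$, the functions $u_c(\cdot)-A_{t_k}(x_k,\cdot)$ converge uniformly on $\T^n$ to $u_c(\cdot)-A_t(x,\cdot)$; hence every accumulation point of $\{y_{t_k,x_k}\}$ is a maximizer of the limit function, so by uniqueness it equals $y_{t,x}$, and therefore $y_{t_k,x_k}\to y_{t,x}$. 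Continuity at points $(x,0)$ is forced by the localization $\mathbf{y}_{x'}(t')\in B(x',\lambda_0 t')$, which gives $\mathbf{y}_{x'}(t')\to x=\mathbf{y}_x(0)$ as $(x',t')\to(x,0)$. Joint continuity on the compact set $\T^n\times[0,t_0]$ then upgrades to the desired uniform-in-$t$ convergence by a routine argument: were it to fail, there would exist $\varepsilon>0$ and $t_k\to t^*$ in $[0,t_0]$ with $|\mathbf{y}_{x_k}(t_k)-\mathbf{y}_x(t_k)|\geqslant\varepsilon$, contradicting $\mathbf{y}_{x_k}(t_k)\to\mathbf{y}_x(t^*)$ and $\mathbf{y}_x(t_k)\to\mathbf{y}_x(t^*)$.

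The only genuinely delicate point is the behaviour near $t=0$, where $A_t(x,\cdot)$ degenerates and continuity of the maximizer is not automatic; this is precisely what the inclusion $y_{t,x}\in B(x,\lambda_0 t)$ from the cited lemma takes care of, so no new estimate is needed and the remainder of the argument is soft (uniqueness of maximizers plus compactness of $\T^n$). I note that one could alternatively bypass the construction and simply iterate the continuous-dependence estimate of Proposition~\ref{properties_g}(a) over the successive intervals $[jt_0,(j+1)t_0]$, $0\leqslant j<i$, at the cost of an $i$-fold product of the corresponding constants; I prefer the argument above since it stays close to \eqref{eq:rescaled_GC}, which is the form of the semiflow used elsewhere in this section.
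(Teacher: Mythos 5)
Your argument is correct, and its overall skeleton (induction over the intervals $[it_0,(i+1)t_0]$, reduced to a one-step convergence statement for the maximizer map \eqref{eq:bold_y}) is the same as the paper's; the difference lies in how the one-step convergence is obtained. The paper first gets uniform convergence of the curves $\mathbf{y}_{x_k}(\cdot)$ on $[0,t_0]$ from the quantitative continuous-dependence estimate of Proposition \ref{properties_g}(a), $|\mathbf{x}(s)-\mathbf{y}(s)|\leqslant C|x-y|$, and only then identifies the limit curve as the maximizer curve issued from $x$ by passing to the limit in the identity $T^+_su_c(x_k)=u_c(\mathbf{y}_{x_k}(s))-A_s(x_k,\mathbf{y}_{x_k}(s))$, using continuity of $(z,y)\mapsto T^+_su_c(z)-u_c(y)+A_s(z,y)$ and uniqueness of maximizers. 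You instead bypass Proposition \ref{properties_g}(a) altogether and prove joint continuity of $(x,t)\mapsto\mathbf{y}_x(t)$ by a soft compactness/uniqueness argument: uniform convergence of $u_c(\cdot)-A_{t_k}(x_k,\cdot)$ for $t$ bounded away from $0$ forces accumulation points of maximizers to be maximizers of the limit, hence equal to $y_{t,x}$, while the localization $y_{t,x}\in B(x,\lambda_0 t)$ handles the degenerate corner $t=0$; this is a legitimate variant (it needs only the qualitative continuity of $A_t$ in all variables, which is standard for Tonelli Lagrangians, and it delivers continuity of the semiflow jointly in $(x,t)$ as a by-product), whereas the paper's route is shorter once the Lipschitz dependence estimate is available and gives a quantitative modulus. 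Your closing remark about iterating Proposition \ref{properties_g}(a) over successive intervals is in fact essentially the mechanism the paper uses, combined with the identification step you prove directly in your base case.
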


\begin{proof}
	Let $\mathbf{y}_k:[0,t_0]\to\T^n$ be a sequence of generalized characteristics defined as in \eqref{eq:bold_y} with initial points $x_k$, i.e.,
	$$
	T^+_su_c(x_k)=u_c(\mathbf{y}_k(s))-A_s(x_k,\mathbf{y}_k(s)),\quad 0<s\leqslant t_0.
	$$
	By Proposition \ref{properties_g} (a), we have that $\mathbf{y}_k$ converges to $\mathbf{y}$ uniformly on $[0,t_0]$. Therefore
	$$
	T^+_su_c(x)=u_c(\mathbf{y}(s))-A_s(x,\mathbf{y}(s)),\quad 0<s\leqslant t_0,
	$$
	by the continuity of the function $F(z,y)=T^+_su_c(z)-u_c(y)+A_s(z,y)$. It follows that $\mathbf{y}(s)$ is the maximizer of $u_c(\cdot)-A_s(x,\cdot)$. Thus, $\mathbf{y}:[0,t_0]\to\T^n$ is Lipschitz continuous by Proposition \ref{properties_g}  and $\mathbf{y}$ coincides with $\mathbf{x}$ on $[0,t_0]$. The rest of the proof is obtained by induction. 
\end{proof}


For any $\tau\in(0,t_0]$ where $t_0$ is fixed and described as above, let $z^{\tau}_i=\mathbf{x}(i\tau)$, $i\in\N$, and let $\mathcal{Z}^{\tau}$ be the set of all convergent subsequences of $\{z^{\tau}_i\}$. For any strictly increasing sequence of natural numbers $\sigma=\{i_1,i_2,\ldots,i_k,\ldots\}$ and the associated convergent subsequence $z^{\tau}_{\sigma}=\{z^{\tau}_{i_k}\}$, we define
	$$
	N^{\tau}_{\sigma}=\sup\{i_{k+1}-i_k: z^{\tau}_{\sigma}\in\mathcal{Z}^{\tau}\}.
	$$
	If $\tau=t_0$, we will take out the superscript $\tau$ for abbreviation. It is clear $\mathbf{x}$ is unique, then it does not depend on the choice of $\tau\in(0,t_0]$ by \eqref{eq:using_tau}.
	
We denote the $\omega$-limit set of $\mathbf{x}$ by $\omega(\mathbf{x})$. It is clear that $\omega(\mathbf{x})\subset\overline{C(x)}$ where $C(x)$ is the connected component of $\SING$ containing $x$. For each $z\in\omega(\mathbf{x})$, there exists $\sigma$ such that the sequence $z_{\sigma}$ converges to $z$. 


\begin{The}\label{asymptotic}
	Let $H$ be a mechanical Hamiltonian as in \eqref{eq:mech_intro} and $u_c$ be a weak KAM solution of \eqref{intr_HJ}. Suppose $x\in\SING$, $\mathbf{x}$ is the unique generalized characteristic starting from $x$, and $C(x)$ is the connected component of $\SING$ containing $x$.
	\begin{enumerate}[\rm (a)]
	\item If $\lim_{t\to\infty}\mathbf{x}(t)$ exists, then there exists $z\in\overline{C(x)}$ such that $0\in H^c_p(z,D^+u_c(z))$.
	\item Suppose that $\lim_{t\to\infty}\mathbf{x}(t)$ does not exist and fix any $\tau\in(0,t_0]$. Then for any $z_{\sigma}\in\mathcal{Z}^{\tau}$ such that $N_{\sigma}<\infty$, there exists a closed generalized characteristic contained in $\overline{\{\mathbf{x}(t):t>0\}}\subset\overline{C(x)}$. 
	\item Let $\tau_k\to 0^+$ as $k\to\infty$. If for each $k\in\N$, there exists an $\sigma_k$ such that $z^{\tau_k}_{\sigma_k}\in\mathcal{Z}^{\tau_k}$ with $N_{\sigma_k}^{\tau_k}<\infty$, and $\lim_{k\to\infty}\tau_kN_{\sigma_k}^{\tau_k}=0$, then there exists $z\in\overline{C(x)}$ such that $0\in \textup{co}\,H^c_p(z,D^+u_c(z))$.
	\item Fix any $\tau\in(0,t_0]$ and $z_{\sigma}\in\mathcal{Z}^{\tau}$. If $N_{\sigma}(\tau)=\infty$, then there exists a global generalized characteristic $\mathbf{y}:(-\infty,+\infty)\to\T^n$ such that $\{\mathbf{y}(t): t\in\R\}$ is contained in $\overline{\{\mathbf{x}(t):t>0\}}\subset\overline{C(x)}$.
	\end{enumerate}
\end{The}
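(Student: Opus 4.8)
The engine of all four parts is the interplay of two facts established above: along any generalized characteristic the increment of $v_c$ is given by the integral formula of Proposition~\ref{properties_g_c}(c)--(d), so $v_c$ is nondecreasing and its rate of growth controls the speed; and generalized characteristics depend continuously on their initial point, so that uniform limits of them are again generalized characteristics (Lemma~\ref{limiting_GC}). Two remarks will be used repeatedly. First, $\mathbf{x}([0,+\infty))\subset C(x)$ by Proposition~\ref{properties_g}(b) and connectedness, $\omega(\mathbf{x})\subset\overline{C(x)}$, and every generalized characteristic has speed bounded by a fixed constant $L$ (because $\|A\|_\infty$ and $\mathrm{Lip}(u_c)$ are finite), so Arzel\`a--Ascoli applies to any family of them. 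Second, although $v_c$ is not $\T^n$-periodic, whenever a piece of a generalized characteristic stays in a ball of diameter $<\tfrac12$ it lifts unambiguously to $\R^n$, and on that piece Proposition~\ref{properties_g_c} applies to the lift; this local lift is the only device needed to bring the monotone functional $v_c$ into the torus picture.

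\emph{Parts (a) and (b).} For (a): choose $T$ with $\mathrm{diam}\,\mathbf{x}([T,+\infty))<\tfrac14$, lift $\mathbf{x}|_{[T,+\infty)}$ to $\tilde{\mathbf{x}}$ with $\tilde{\mathbf{x}}(t)\to\tilde z$; since $s\mapsto\tilde{\mathbf{x}}(T+s)$ is, by uniqueness, the $\R^n$-generalized characteristic from $\tilde{\mathbf{x}}(T)$, Proposition~\ref{properties_g_c}(c)--(d) gives $v_c(\tilde{\mathbf{x}}(t))-v_c(\tilde{\mathbf{x}}(T))=\int_T^t\langle p(s),A(\tilde{\mathbf{x}}(s))p(s)\rangle\,ds$ with $p(s)$ the minimizer in $D^+v_c(\tilde{\mathbf{x}}(s))$. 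The left side converges as $t\to\infty$, so the integrand vanishes along some $s_j\to\infty$; by upper semicontinuity and boundedness of $D^+v_c$, a subsequence of $p(s_j)$ converges to $\bar p\in D^+v_c(\tilde z)$ with $\langle\bar p,A(\tilde z)\bar p\rangle=0$, hence $\bar p=0$. Thus $0\in c+D^+u_c(z)$, i.e. $0\in H^c_p(z,D^+u_c(z))$, and $z\in\overline{C(x)}$; this is Theorem~\ref{th:critical1} localized near $z$. For (b): write $z_\sigma=\{\mathbf{x}(i_k\tau)\}_k\to z$ with $1\le i_{k+1}-i_k\le N_\sigma<\infty$; by the pigeonhole principle a subsequence has a constant gap $m\ge1$, and on $[0,m\tau]$ the arcs $s\mapsto\mathbf{x}(i_k\tau+s)=\phi_s(\mathbf{x}(i_k\tau))$ converge uniformly to $\phi_\cdot(z)$ by Lemma~\ref{limiting_GC}. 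Letting $k\to\infty$ in $\mathbf{x}(i_{k+1}\tau)=\phi_{m\tau}(\mathbf{x}(i_k\tau))$ gives $\phi_{m\tau}(z)=z$, so $t\mapsto\phi_t(z)$ is $m\tau$-periodic; since $\phi_s(z)=\lim_k\mathbf{x}(i_k\tau+s)$ with $i_k\tau+s\to\infty$, this closed characteristic lies in $\omega(\mathbf{x})\subset\overline{C(x)}\subset\overline{\{\mathbf{x}(t):t>0\}}$.

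\emph{Parts (c) and (d).} For (c): if $\lim_{t\to\infty}\mathbf{x}(t)$ exists, apply (a). Otherwise, for each $k$ run the construction of (b) with $\tau=\tau_k$, $\sigma=\sigma_k$, obtaining a closed generalized characteristic $\mathbf{w}_k\subset\overline{C(x)}$ of period $p_k=m_k\tau_k$ with $m_k\le N^{\tau_k}_{\sigma_k}$, so $p_k\le\tau_k N^{\tau_k}_{\sigma_k}\to0$. Then $\mathrm{diam}\,\mathbf{w}_k\le Lp_k\to0$, so for large $k$ the loop $\mathbf{w}_k$ is null-homotopic and lifts to a genuinely $p_k$-periodic arc $\tilde{\mathbf{w}}_k$ in $\R^n$; by Proposition~\ref{properties_g_c}(d), $0=v_c(\tilde{\mathbf{w}}_k(p_k))-v_c(\tilde{\mathbf{w}}_k(0))=\int_0^{p_k}\langle p(s),A(\tilde{\mathbf{w}}_k(s))p(s)\rangle\,ds$, which forces $p(s)\equiv0$, hence $\dot{\mathbf{w}}_k^{\,+}\equiv0$, so $\mathbf{w}_k$ reduces to a single point $z_k$ with $0\in c+D^+u_c(z_k)$. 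Extracting $z_k\to z\in\overline{C(x)}$ and using upper semicontinuity of $D^+u_c$ gives $0\in c+D^+u_c(z)$, hence $0\in\mathrm{co}\,H^c_p(z,D^+u_c(z))$. For (d): put $\theta_k=i_k\tau\to+\infty$ (the hypothesis $N_\sigma=\infty$ only serves to exclude the conclusion of (b)); the arcs $s\mapsto\mathbf{x}(\theta_k+s)$ are eventually defined on any fixed $[-T,T]$ and equi-Lipschitz with constant $L$, so a diagonal Arzel\`a--Ascoli argument yields a subsequential limit $\mathbf{y}:\R\to\T^n$, uniform on compacta. For $a\in\R$ and $k$ with $\theta_k+a>0$ we have $\mathbf{x}(\theta_k+a+s)=\phi_s(\mathbf{x}(\theta_k+a))$; passing to the limit with Lemma~\ref{limiting_GC} gives $\mathbf{y}(a+s)=\phi_s(\mathbf{y}(a))$ for all $s\ge0$, so $\mathbf{y}$ is a global generalized characteristic, and $\mathbf{y}(s)=\lim_k\mathbf{x}(\theta_k+s)$ with $\theta_k+s\to+\infty$ gives $\mathbf{y}(\R)\subset\omega(\mathbf{x})\subset\overline{C(x)}\subset\overline{\{\mathbf{x}(t):t>0\}}$.

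No single step is deep: once Proposition~\ref{properties_g} and Lemma~\ref{limiting_GC} are in hand, the proof is an organized deployment of compactness and monotonicity. The two points that need care are that the limiting procedures in (b)--(d) genuinely return generalized characteristics --- which is precisely where the uniqueness property of mechanical Hamiltonians enters, through Lemma~\ref{limiting_GC}, and the reason the argument does not transfer verbatim to general Tonelli Hamiltonians --- and that the non-periodic monotone functional $v_c$ be made usable at all, which forces the local-lifting device and hence the diameter control: this is automatic near a limit point in (a), while in (c) it is produced by $p_k\to0$ for the vanishing closed loops.
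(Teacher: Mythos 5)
Your argument is correct, and parts (b) and (d) run essentially parallel to the paper's proof (pigeonhole plus a limit of translated segments, respectively a two--sided Arzel\`a--Ascoli limit validated by Lemma~\ref{limiting_GC}; your fixed-point formulation $\phi_{m\tau}(z)=z$ and your use of the semiflow identity in the backward limit are cleaner but equivalent devices, the paper instead invoking upper semicontinuity of $x\rightrightarrows\mathrm{co}\,H^c_p(x,D^+u_c(x))$). Where you genuinely diverge is in (a) and (c). The paper never passes through $v_c$ here: for (a) it takes the minimizers $\xi^n_i$ of the fundamental solution $A_{\tau_n}(\mathbf{x}(i\tau_n),\mathbf{x}((i+1)\tau_n))$, lets them converge to a short closed minimizer at the limit point, and uses the a priori $C^2$ bound $|\dot{\xi}^n(s)-\dot{\xi}^n(\tau_n)|\leqslant C_0(\tau_n-s)$ to force the terminal velocity to vanish; for (c) it averages the terminal velocities of such minimizers around the short closed loops and concludes $0\in\mathrm{co}\,H^c_p(z,D^+u_c(z))$ by convexity and compactness. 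You instead lift to $\R^n$ and exploit the gradient-like monotonicity of $v_c$ (in effect re-running Theorem~\ref{th:critical1} along the orbit, and, in (c), on the lifted short loops, which must then degenerate to critical points). Each route buys something: yours is shorter in the mechanical setting and delivers the sharper conclusion $0\in H^c_p(z,D^+u_c(z))$ in (c) with no convex hull, exactly as the paper's Remark after the theorem predicts for mechanical $H$; the paper's Lagrangian argument, by contrast, uses only the uniqueness property and no Lyapunov-type function, which is why (again per that Remark) it extends beyond the mechanical case, where your $v_c$-based device is unavailable. Your care about the local lifting (diameter control before invoking Proposition~\ref{properties_g_c}) is exactly the right point to watch and is handled correctly.

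One small slip to fix: twice you write the chain $\omega(\mathbf{x})\subset\overline{C(x)}\subset\overline{\{\mathbf{x}(t):t>0\}}$; the last inclusion is backwards ($C(x)$ can be much larger than the orbit closure). What you need, and what your construction actually gives, is $\omega(\mathbf{x})\subset\overline{\{\mathbf{x}(t):t>0\}}\subset\overline{C(x)}$, the second inclusion holding because the forward orbit is a connected subset of $\mbox{\rm Sing}\,(u_c)$ containing $x$. This is a harmless mis-ordering, not a gap.
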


\begin{Rem}
	In fact, all  results in the theorem above hold true if we only suppose $H$ has the uniqueness property. In the case that $H$ is a mechanical Hamiltonian, the convex hull in (c) will disappear. Unfortunately, it is still unknown whether there is another example of Hamiltonian which still has the uniqueness property.
\end{Rem}

\begin{proof}
	Fix any $x\in\SING$. Choose a sequence $\tau_n\to 0^+$ and let $z^{n}_i=\mathbf{x}(i\tau_n)$. Then $\lim_{i\to\infty}z^n_i=z\in\overline{C(x)}$. We claim that
	$$
	D_yA_{\tau_n}(z,z)\in D^+u_c(z).
	$$ 
	Indeed, let $\xi^n_i\in\Gamma^{\tau_n}_{z^n_i,z^n_{i+1}}$ be the minimizer of $A_{\tau_n}(z^n_i,z^n_{i+1})$. Then $\{\xi^n_i\}_{i\in\N}$ is equi-Lipschitz. So, by taking a subsequence, we obtain a Lipschitz curve $\xi\in\Gamma^{\tau_n}_{z,z}$ which is also a minimizer for $A_{\tau_n}(z,z)$. On the other hand, we have  $L^c_v(\xi^n_i(\tau_n),\dot{\xi}^n_i(\tau_n))\in D^+u_c(z^n_{i+1})$. Therefore,
	\begin{align*}
		D_yA_{\tau_n}(z,z) = L^c_v(\xi^n(\tau_n),\dot{\xi}^n(\tau_n)) \in D^+u_c(z)
	\end{align*}
	by the upper semicontinuity of the set valued map $x\rightrightarrows D^+u_c(x)$.
	
	We will show that $0\in H^c_p(z,D^+u_c(z))$. Indeed, we have
	\begin{equation}\label{eq:limit_velociy}
	\begin{split}
		|\dot{\xi}^n(\tau_n)|=\left|\frac{\xi^n(\tau_n)-z}{\tau_n}-\dot{\xi}^n(\tau_n)\right|
		&\leqslant\frac 1{\tau_n}\int^{\tau_n}_{0}|\dot{\xi}^n(s)-\dot{\xi}^n(\tau_n)|\ ds\\
		&\leqslant\frac {C_0}{\tau_n}\int^{\tau_n}_{0}(\tau_n-s)\ ds=\frac {C_0}2\tau_n.
	\end{split}
	\end{equation}
Therefore $\lim_{n\to\infty}\dot{\xi}^n(\tau_n)=0$. Recall that
$$
\dot{\xi}^n(\tau_n)=H^c_p(z,L^c_v(z,\dot{\xi}^n(\tau_n)))\in H^c_p(z,D^+u_c(z)).
$$
It follows $0\in H^c_p(z,D^+u_c(z))$. This proves (a).

Now let us prove part (b). Let $\sigma=\{i_1,i_2,\dots,i_k,\dots\}$. Since $\lim_{t\to\infty}\mathbf{x}(t)$ does not exist and $N^\tau_{\sigma}<\infty$, by the principle of pigeon hole, there exist a positive integer $N\geqslant 2$, and a sequence of pairs $(i_{k_j},i_{k_j+1})$ such that $i_{k_j+1}-i_{k_j}\equiv N$ for all $j\in\N$. Thus, without loss of generality, we can assume $i_{k+1}-i_{k}\equiv N$ for all $k\in\N$.

We denote by $\tilde{\mathbf{y}}^{\tau}_k:[i_k\tau,i_{k+1}\tau]\to\T^n$ a segment of $\mathbf{x}_{\tau}$ and define $\mathbf{y}^{\tau}_k:[0,N\tau]\to M$ as $\mathbf{y}^{\tau}_k(s)=\tilde{\mathbf{y}}^{\tau}(s+i_k\tau)$. The sequence $\{\mathbf{y}^{\tau}_k\}_{k\in\N}$ is equi-Lipschitz, thus it converges to a Lipschitz curve $\mathbf{y}^{\tau}:[0,N\tau]\to\T^n$ without loss of generality, and $\mathbf{y}^{\tau}(0)=\mathbf{y}^{\tau}(N\tau)=z^{\tau}$. Therefore $\mathbf{y}^{\tau}$ is a closed generalized characteristic by Lemma \ref{limiting_GC}. This completes the proof of (b).

The proof of (c) is a refinement of that of (b). Suppose $\tau_k\to 0^+$ as $k\to\infty$. If for each $k\in\N$, there exists $\sigma_k$ such that $z^{\tau_k}_{\sigma_k}\in\mathcal{Z}^{\tau_k}$ with $N_{\sigma_k}^{\tau_k}<\infty$, $\lim_{k\to\infty}\tau_kN_{\sigma_k}^{\tau_k}=0$. Let $N(k)\geqslant 2$ be as in the proof of (b) and $\mathbf{y}^{\tau_k}:[0,N(k)\tau_k]\to\T^n$ be the closed generalized characteristic with $\mathbf{y}^{\tau_k}(0)=\mathbf{y}^{\tau_k}(N(k)\tau_k)=z^{\tau_k}$ by (b). It is clear that $\lim_{k\to\infty}N(k)\tau_k=0$ since $N(k)\leqslant N_{\sigma_k}^{\tau_k}$. 

Let $\xi^k_j\in\Gamma^{\tau_k}_{\mathbf{y}^{\tau_k}(j\tau_k),\mathbf{y}^{\tau_k}((j+1)\tau_k)}$ be the minimizer for $A_{\tau_k}(\mathbf{y}^{\tau_k}(j\tau_k),\mathbf{y}^{\tau_k}((j+1)\tau_k))$, $j=0,\ldots,N(k)-1$. Thus
	\begin{equation}\label{eq:limit_velociy2}
	\begin{split}
		\left|\frac 1{N(k)}\sum_{j=0}^{N(k)-1}\dot{\xi}^k_j(\tau_k)\right|=&\left|\frac 1{N(k)}\sum_{j=0}^{N(k)-1}\frac{\xi^k_j(\tau_k)-\xi^k_j(0)}{\tau_k}-\frac 1{N(k)}\sum_{j=0}^{N(k)-1}\dot{\xi}^k_j(\tau_k)\right|\\
		\leqslant&\frac 1{N(k)}\sum_{j=0}^{N(k)-1}\frac 1{\tau_k}\int^{\tau_k}_0|\dot{\xi}^k_j(s)-\dot{\xi}^k_j(\tau_k)|\ ds\leqslant\frac{C_0}{2}\tau_k.
	\end{split}
	\end{equation}
	Due to nonsmooth Fermat rule, we have that 
	\begin{align*}
		L^c_v(\xi^k_j(\tau_k),\dot{\xi}^k_j(\tau_k))=D_yA_{\tau_k}(\mathbf{y}^{\tau_k}(j\tau_k),\mathbf{y}^{\tau_k}((j+1)\tau_k))\in D^+u_c(\xi^k_j(\tau_k)),
	\end{align*}
	or $\dot{\xi}^k_j(\tau_k)\in H^c_p(\xi^k_j(\tau_k),D^+u_c(\xi^k_j(\tau_k)))$. Therefore
	$$
	\frac 1{N(k)}\sum_{j=0}^{N(k)-1}\dot{\xi}^k_j(\tau_k)\in\frac 1{N(k)}\sum_{j=0}^{N(k)-1}H^c_p(\xi^k_j(\tau_k),D^+u_c(\xi^k_j(\tau_k))).
	$$
	By \eqref{eq:limit_velociy2}, the condition $\lim_{k\to\infty}N(k)\tau_k=0$, and the compactness, there exists $z\in\overline{C(x)}$ such that
	$$
	0\in\text{co}\,H^c_p(z,D^+u_c(z)).
	$$
	This proves (c).
	
	Finally, we turn to the proof of (d). Let $\sigma=\{i_1,i_2,\ldots,i_k,\ldots\}$. Since there exists a sequence of pairs $(i_{k_j},i_{k_j+1})$ such that $\lim_{j\to\infty}(i_{k_j+1}-i_{k_j})=\infty$ by the assumption $N_{\sigma}^{\tau}=\infty$, we assume the sequence $\{i_{k+1}-i_k\}$ to be strictly increasing without loss of generality.
	
	Now, set $z^{\tau}_k=\mathbf{x}(i_k\tau)$ ($k\in\N$). Then $\{z^{\tau}_k\}$ converges to some $z^{\tau}\in\overline{C(x)}$ as $k\to\infty$. For any positive integer $k$, define $\mathbf{x}^k_{\tau}:[0,(i_{k+1}-i_k)\tau]\to\T^n$ by $\mathbf{x}^k_{\tau}(s)=\mathbf{x}_{\tau}(s+i_{k}\tau)$, then $\mathbf{x}^k_{\tau}(0)=z^{\tau}_k$. Invoking Lemma \ref{limiting_GC}, there exists $\mathbf{z}:[0,+\infty)\to\T^n$ such that $\mathbf{x}^k_{\tau}$ converges uniformly to $\mathbf{z}$ on each compact subinterval of $[0,+\infty)$. Moreover, $\mathbf{z}$ satisfies the generalized characteristic inclusion. On the other hand, we define $\mathbf{y}^k_{\tau}:[-(i_{k+1}-i_k)\tau,0]\to\T^n$ by $\mathbf{y}^k_{\tau}(s)=\mathbf{x}_{\tau}(s+i_{k+1}\tau)$. Similarly, by Ascoli-Arzela theorem and the upper-semicontinuity of the compact-valued multifuncntion $x\rightrightarrows\text{co}\,H^c_p(x,D^+u_c(x))$, there exists $\mathbf{y}:(-\infty,0]\to\T^n$ such that $\mathbf{y}^k_{\tau}$ converges uniformly to $\mathbf{y}$ on each compact subinterval of $(-\infty,0]$. The juxtaposition of $\mathbf{y}$ and $\mathbf{z}$ gives the desired global generalized characteristic. 
\end{proof}

We need reformulate Theorem \ref{asymptotic}.

\begin{The}
	Suppose $H$ is a mechanical Hamiltonian as in \eqref{eq:mech_intro}, $u_c$ is a weak KAM solution of \eqref{intr_HJ}, $x\in\SING$, and $C(x)$ is the connected component of $\SING$ containing $x$. Let $\mathbf{x}:[0,+\infty)\to\T^n$ be the unique generalized characteristic staring from $x$. If there is no critical point of $u_c$ with respect to $H^c$ in $\overline{C(x)}$, then $\lim_{t\to\infty}\mathbf{x}(t)$ does not exists. In addition, for any $\sigma\in \mathcal{Z}^{t_0}$,  the following properties hold:
	\begin{enumerate}[\rm (a)]
	\item If $ N_{\sigma}<\infty$, then there exists a non-constant closed generalized characteristic $\mathbf{y}:[0,T]\to\T^n$, $\mathbf{y}(0)=\mathbf{y}(T)$, contained in $\overline{\{\mathbf{x}(t):t>0\}}\subset\overline{C(x)}$. Moreover, we have either
	\begin{enumerate}[\rm (1)]
	\item $\mathbf{y}:[0,T]\to\T^n$ is a $C^2$ closed regular characteristic contained in $\mathcal{I}_{u_c}$, or
	\item $\mathbf{y}:[0,T]\to\T^n$ is a closed singular generalized characteristic.
	\end{enumerate}
	\item If $N_{\sigma}=\infty$\footnote{We can always suppose that $N_{\sigma}=\infty$ by choosing a suitable subsequence of $\sigma$ if necessary.}, then there exists a global generalized characteristic $\mathbf{y}:\R\to\T^n$ such that $\{\mathbf{y}(t): t\in\R\}$ is contained in $\overline{\{\mathbf{x}(t):t>0\}}\subset\overline{C(x)}$. Moreover, we have either
	\begin{enumerate}[\rm (1)]
	\item $\mathbf{y}:\R\to\T^n$ is a global singular generalized characteristic, or
	\item $\overline{C(x)}\cap\mathcal{I}_{u_c}\neq\emptyset$. In particular, $\overline{C(x)}\cap\mathcal{A}(H^c)\neq\emptyset$.

	\end{enumerate}
	\end{enumerate}
\end{The}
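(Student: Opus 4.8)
The plan is to deduce this reformulated theorem from Theorem~\ref{asymptotic} together with the structural properties of generalized characteristics and a case analysis based on the dichotomy ``singular vs.\ regular characteristic.'' First I would establish that $\lim_{t\to\infty}\mathbf{x}(t)$ does not exist: if it did, part (a) of Theorem~\ref{asymptotic} would produce $z\in\overline{C(x)}$ with $0\in H^c_p(z,D^+u_c(z))$, and since for the mechanical Hamiltonian this forces $0\in c+D^+u_c(z)$ (because $A(z)$ is positive definite), $z$ would be a critical point of $u_c$ with respect to $H^c$ lying in $\overline{C(x)}$, contradicting the hypothesis. Hence $\lim_{t\to\infty}\mathbf{x}(t)$ does not exist, and we may apply parts (b) and (d) of Theorem~\ref{asymptotic} with $\tau=t_0$.

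For part (a), assume $N_\sigma<\infty$. Part (b) of Theorem~\ref{asymptotic} gives a closed generalized characteristic $\mathbf{y}:[0,T]\to\T^n$ with $\mathbf{y}(0)=\mathbf{y}(T)=z$, contained in $\overline{\{\mathbf{x}(t):t>0\}}\subset\overline{C(x)}$. It cannot be constant: a constant solution would mean $\mathbf{y}(t)\equiv z$ solves $\dot{\mathbf{x}}(s)\in A(\mathbf{x}(s))(c+D^+u_c(\mathbf{x}(s)))$ with $\dot{\mathbf{x}}^+\equiv 0$, i.e.\ $0\in A(z)(c+D^+u_c(z))$, again making $z$ a critical point in $\overline{C(x)}$, a contradiction. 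Now split into two cases. If $\dot{\mathbf{y}}^+(s)\neq 0$ for all $s$, I would show $\mathbf{y}$ is a regular $C^2$ characteristic: at each point $D^+u_c(\mathbf{y}(s))$ must be a singleton, for otherwise $\mathbf{y}(s)$ would be a singular point whose associated $p(s)$ minimizing the quadratic form would have to be the gradient, but along a closed orbit the ``increasing'' property \eqref{eq:increasing} (lifted to $v_c$ via \eqref{intro:v}) would give $v_c(\mathbf{y}(0))<v_c(\mathbf{y}(T))=v_c(\mathbf{y}(0))$, impossible; hence $u_c$ is differentiable along the whole closed curve, $\mathbf{y}$ solves the classical characteristic ODE, and is calibrated, so it lies in $\mathcal{I}_{u_c}$. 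If instead $\dot{\mathbf{y}}^+(s_0)=0$ for some $s_0$, then since $\mathbf{y}$ is non-constant and the generalized characteristic from $\mathbf{y}(s_0)$ is unique, $\mathbf{y}(s_0)$ must be a singular point (if it were a point of differentiability with zero velocity, uniqueness would force $\mathbf{y}$ constant), and by propagation of singularities (Proposition~\ref{properties_g}(b)) the whole orbit stays in $\SING$, giving the singular alternative. A short argument using \eqref{intro:v} and \eqref{cri_v} shows these two cases are exhaustive and that in the first case ``regular'' genuinely means avoiding $\mathrm{Crit}_{H^c}(u_c)$, which is consistent with the hypothesis.

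For part (b), assume $N_\sigma=\infty$ (passing to a subsequence as in the footnote). Part (d) of Theorem~\ref{asymptotic} gives a global generalized characteristic $\mathbf{y}:\R\to\T^n$ with $\{\mathbf{y}(t):t\in\R\}\subset\overline{\{\mathbf{x}(t):t>0\}}\subset\overline{C(x)}$. If $\mathbf{y}(t)\in\SING$ for all $t$, we are in alternative (1). Otherwise there is $t_1$ with $\mathbf{y}(t_1)$ a point of differentiability of $u_c$; by the semiflow property and uniqueness, $\mathbf{y}|_{[t_1,\infty)}$ coincides with $\Phi_{\cdot-t_1}(\mathbf{y}(t_1))$ projected to $\T^n$, and since it stays in a bounded region it is a bona fide global-in-both-directions calibrated curve through $\mathbf{y}(t_1)$ (extending backward along the classical characteristic flow, which is possible as long as one stays at differentiability points; propagation of singularities forward combined with $\mathbf{y}(t_1)$ being regular forces $\mathbf{y}$ to be regular on a neighborhood, and one bootstraps). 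Thus $\mathbf{y}(t_1)=\gamma(0)$ for a $(u_c,L^c,0)$-calibrated curve $\gamma:\R\to\T^n$, so $\mathbf{y}(t_1)\in\mathcal{I}_{u_c}$ and $\mathbf{y}(t_1)\in\overline{C(x)}$, giving alternative (2); that $\overline{C(x)}\cap\mathcal{A}(H^c)\neq\emptyset$ requires the additional observation that $\mathcal{I}_{u_c}\subset\mathcal{A}(H^c)$ fails in general, so I would instead argue that the projection of the calibrated curve lands in the Aubry set because a global two-sided calibrated curve is contained in $\mathcal{A}(H^c)$ by the standard weak KAM characterization of the Aubry set (the footnote's phrasing ``in particular, the Aubry set'' should be read with this standard fact).

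The main obstacle I anticipate is the regular/singular dichotomy in part (a)(1): showing that a non-constant \emph{closed} generalized characteristic which is nowhere a singular point must actually be a closed \emph{calibrated} curve lying in $\mathcal{I}_{u_c}$, rather than merely a closed orbit of the (set-valued but here single-valued) characteristic field. The delicate point is ruling out that such an orbit could fail to be calibrated — here the monotonicity estimate \eqref{eq:increasing}, which forces strict increase of $v_c$ along any orbit with nonvanishing velocity, is the crucial tool, since it is flatly incompatible with periodicity unless the curve is in fact a closed characteristic of the \emph{Aubry} type where $v_c$ is ``flat'' along it in the appropriate covering sense; making this covering-space bookkeeping precise (the orbit on $\T^n$ closes up but its lift in $\R^n$ need not) is where the argument needs the most care, and I would handle it by working with $v_c$ on $\R^n$ and noting that a closed orbit on $\T^n$ with $v_c$ strictly increasing on each lifted segment would have its lift escape to infinity, contradicting that the orbit is contained in the bounded set $\overline{\{\mathbf{x}(t):t>0\}}$ — wait, that set need not be bounded in $\R^n$; so instead I would use that $u_c$ itself is $\T^n$-periodic and bounded, and that along a closed orbit on $\T^n$ the \emph{change} in $u_c$ over one period is zero, which via part (d) of Proposition~\ref{properties_g_c} (the representation of $\tfrac{d}{ds^+}v_c(\mathbf{x}(s))$, equivalently of $u_c$ along the orbit) forces $\langle p(s),A(\mathbf{x}(s))p(s)\rangle\equiv 0$ for a.e.\ $s$, hence $p(s)+c=0$, hence every point of the orbit is in $\mathrm{Crit}_{H^c}(u_c)\subset\overline{C(x)}$ — contradicting the hypothesis. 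This last contradiction shows the velocity cannot vanish and the orbit cannot be singular-free-and-non-calibrated; a more careful reading shows it actually forces the closed orbit to be the calibrated (Aubry) type, which is exactly alternative (1).
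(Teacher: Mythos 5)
Your reduction of the first statement to Theorem~\ref{asymptotic}(a) and your use of parts (b) and (d) of that theorem to produce the closed, respectively global, characteristic match the paper, as does the non-constancy remark. The genuine problems are in the two dichotomies. In part (a) you split on whether $\dot{\mathbf{y}}^+$ vanishes, but this is not the relevant split: under the standing hypothesis a vanishing right derivative at $\mathbf{y}(s_0)$ already means $0\in c+D^+u_c(\mathbf{y}(s_0))$, i.e.\ a critical point in $\overline{C(x)}$, so your ``case 2'' is vacuous, while your ``case 1'' argument that nonvanishing velocity forces $D^+u_c$ to be a singleton along the loop is wrong. The monotonicity \eqref{eq:increasing} concerns $v_c=\langle c,\cdot\rangle+u_c$ on the universal cover; the lift of a closed loop on $\T^n$ closes only up to an integer translation $k\in\Z^n$, so $v_c$ increases by $\langle c,k\rangle$ over one period and no contradiction arises — and in any case \eqref{eq:increasing} holds irrespective of whether the points are singular, so it cannot separate alternative (1) from (2). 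Your attempted repair conflates $u_c$ with $v_c$: Proposition~\ref{properties_g_c}(d) gives $\frac{d}{ds^+}v_c=\langle p,Ap\rangle$ with $p\in c+D^+u_c$, and $\int_0^T\langle p,Ap\rangle\,ds=\langle c,k\rangle$, not $0$; had your conclusion $\langle p,Ap\rangle\equiv0$ held, it would (by positive definiteness) produce a critical point and wrongly exclude the singular alternative (2), which the theorem explicitly allows. The paper's argument is different and simpler: look at whether the base point $z_0=z_N$ of the closed curve is a point of differentiability or a singular point, and spread that property around the loop by forward propagation of singularities along the unique generalized characteristic (Proposition~\ref{properties_g}(b) and Lemma~3.2 of \cite{Cannarsa-Cheng3}); in the differentiable case the closed curve is a classical, calibrated characteristic and hence lies in $\mathcal{I}_{u_c}$.

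In part (b) you claim that once $\mathbf{y}(t_1)$ is a point of differentiability, $\mathbf{y}$ is a two-sided $(u_c,L^c,0)$-calibrated curve, and then that a global calibrated curve lies in $\mathcal{A}(H^c)$. Both steps fail: forward in time a generalized characteristic issued from a differentiability point may enter $\SING$ and is in general not calibrated, so you only get calibration on a backward ray; and a curve calibrated on all of $\R$ lies in $\mathcal{I}_{u_c}$ by definition but not in $\mathcal{A}(H^c)$ in general (you yourself note $\mathcal{I}_{u_c}\not\subset\mathcal{A}(H^c)$; think of separatrix pieces for the pendulum). The correct route, which is the paper's, is: by forward propagation of singularities all $\mathbf{y}(s)$, $s\leqslant t_1$, are points of differentiability, so $\mathbf{y}\vert_{(-\infty,t_1]}$ is a backward calibrated curve; the standard weak KAM fact (Fathi) is that the $\alpha$-limit set of such a curve is contained in $\mathcal{A}(H^c)$, and since the curve lies in the closed set $\overline{C(x)}$ this gives $\overline{C(x)}\cap\mathcal{A}(H^c)\neq\emptyset$ and a fortiori $\overline{C(x)}\cap\mathcal{I}_{u_c}\neq\emptyset$ because $\mathcal{A}(H^c)\subset\mathcal{I}_{u_c}$. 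As written, your proof of both dichotomies has gaps that the forward-propagation argument is precisely designed to close.
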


\begin{proof}
	The first statement is a direct consequence of Theorem \ref{asymptotic}. 
	
	We now turn to the proof of (a). If $N_{\sigma}<\infty$, in view of Theorem \ref{asymptotic} (b), there exists a non-constant closed generalized characteristic $\mathbf{y}:[0,Nt_0]\to\T^n$, $\mathbf{y}(0)=\mathbf{y}(Nt_0)$. For $j=0,1,\ldots,N$, set $z_j=\mathbf{y}(jt_0)$. Suppose that $z_0=z_N$ is a point of differentiability (resp. singular point) of $u_c$, then $\mathbf{y}(s)$ with $s\in[(N-1)t_0,Nt_0]$ (resp. $s\in[0,t_0]$), are all points of differentiability (resp. singular points) of $u_c$ by our uniqueness assumption on the generalized characteristics and \cite[Lemma 3.2]{Cannarsa-Cheng3}. Therefore, by induction, we can prove that $\mathbf{y}(s)$, $s\in[0,Nt_0]$, are all points of differentiability (resp. singular points) of $u_c$. Moreover, for the case that $z_0=z_N$ is a differentiable point, $\{\mathbf{y}(s): s\in[0,Nt_0]\}$ is contained in $\mathcal{I}_{u_c}$. This completes the proof of part (a).
		
	The first part of (b) is already known. As for the second part, we know there exists a global generalized characteristic $\mathbf{y}:(-\infty,+\infty)\to\T^n$ contained in $\overline{C(x)}$. If $\mathbf{y}$ is not a global singular generalized characteristic, then there exists a point of differentiability, say $\mathbf{y}(0)$, of $u_c$. Then $\mathbf{y}(s)$, $s\leqslant0$, are all points of differentiability of $u_c$. Since the uniqueness assumption, it is clear that $\mathbf{y}:(-\infty,0]\to\T^n$ is a $u_c$-calibrated curve whose $\alpha$-limit set is contained in the Aubry set $\mathcal{A}(H^c)$, see \cite{Fathi-book}. Therefore $\overline{C(x)}\cap\mathcal{A}(H^c)\neq\varnothing$. 
\end{proof}

\begin{The}
	Suppose $H$ is a mechanical Hamiltonian as in \eqref{eq:mech_intro}, $x\in\SING$ and $\mathbf{x}$ is the unique generalized characteristic starting from $x$. If there exists a point of differentiability of $u_c$ in $\omega(\mathbf{x})$, then $\omega(\mathbf{x})\cap\mathcal{A}(H^c)\not=\varnothing$.
\end{The}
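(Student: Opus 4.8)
The plan is to manufacture, entirely inside $\omega(\mathbf{x})$, a $(u_c,L^c,0)$-calibrated curve defined on $(-\infty,0]$ that issues from the given differentiability point, and then to invoke the classical weak KAM fact that the $\alpha$-limit set of such a curve is contained in the projected Aubry set $\mathcal{A}(H^c)$ (see \cite{Fathi-book}, as already used in the proof of the preceding theorem). Note this argument will not require case-splitting on whether $\lim_{t\to\infty}\mathbf{x}(t)$ exists; it subsumes Theorem~\ref{asymptotic}(a).

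So let $z_0\in\omega(\mathbf{x})$ be a point at which $u_c$ is differentiable, and choose $t_k\uparrow+\infty$ with $\mathbf{x}(t_k)\to z_0$. Fix $m\in\N$ and put $T=mt_0$. For $k$ large enough that $t_k\geqslant T$, the restriction of $\mathbf{x}$ to $[t_k-T,t_k]$, reparametrised as $s\mapsto\mathbf{x}(s+t_k)=\phi_{s+T}\bigl(\mathbf{x}(t_k-T)\bigr)$ for $s\in[-T,0]$, is a genuine generalized characteristic, being a time segment of the semiorbit of $\phi$ through $\mathbf{x}(t_k-T)$. Passing to a subsequence we may assume $\mathbf{x}(t_k-T)\to w^{(m)}\in\T^n$, and since $t_k-T\to+\infty$ we get $w^{(m)}\in\omega(\mathbf{x})$. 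By Lemma~\ref{limiting_GC} the above segments converge uniformly on $[-T,0]$ to $\mathbf{y}^{(m)}(s):=\phi_{s+T}(w^{(m)})$, again a generalized characteristic, with $\mathbf{y}^{(m)}(0)=\lim_k\mathbf{x}(t_k)=z_0$ and $\mathbf{y}^{(m)}(s)=\lim_k\mathbf{x}(s+t_k)\in\omega(\mathbf{x})$ for every $s\in[-T,0]$ (because $s+t_k\to+\infty$). Letting $m$ vary, the curves $\mathbf{y}^{(m)}$ are equi-Lipschitz, and a diagonal extraction, combined once more with Lemma~\ref{limiting_GC} applied to the base points $\mathbf{y}^{(m)}(-mt_0)$, yields a curve $\mathbf{y}:(-\infty,0]\to\T^n$ satisfying $\mathbf{y}|_{[-mt_0,0]}(s)=\phi_{s+mt_0}(\mathbf{y}(-mt_0))$ for every $m$ — in particular a generalized characteristic on each $[-mt_0,0]$ — with $\mathbf{y}(0)=z_0$ and $\mathbf{y}(s)\in\omega(\mathbf{x})$ for all $s\leqslant0$.

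Next I would check that $\mathbf{y}$ is calibrated. Since $\mathbf{y}(0)=z_0$ is a differentiability point, the backward propagation of regularity along generalized characteristics block by block — via \cite[Lemma~3.2]{Cannarsa-Cheng3} together with the uniqueness statement in Proposition~\ref{properties_g}, exactly as in the proof of the preceding theorem — shows inductively that $u_c$ is differentiable at $\mathbf{y}(s)$ for every $s\leqslant0$. Consequently, by Proposition~\ref{properties_g}(c), $\dot{\mathbf{y}}(s)=H^c_p(\mathbf{y}(s),Du_c(\mathbf{y}(s)))$ for a.e.\ $s\leqslant0$; combining this with $H^c(\mathbf{y}(s),Du_c(\mathbf{y}(s)))=0$ and the Fenchel identity $\langle p,H^c_p(x,p)\rangle=H^c(x,p)+L^c(x,H^c_p(x,p))$ gives
\begin{equation*}
\frac{d}{ds}\,u_c(\mathbf{y}(s))=\langle Du_c(\mathbf{y}(s)),\dot{\mathbf{y}}(s)\rangle=L^c(\mathbf{y}(s),\dot{\mathbf{y}}(s))\qquad\text{a.e. } s\leqslant0,
\end{equation*}
and integrating shows $\mathbf{y}$ is $(u_c,L^c,0)$-calibrated on $(-\infty,0]$.

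Finally, $\alpha(\mathbf{y})$ is nonempty by compactness of $\T^n$; it is contained in $\overline{\{\mathbf{y}(s):s\leqslant0\}}\subseteq\omega(\mathbf{x})$ since $\omega(\mathbf{x})$ is closed; and it is contained in $\mathcal{A}(H^c)$ by the weak KAM fact recalled at the outset. Hence $\varnothing\neq\alpha(\mathbf{y})\subseteq\omega(\mathbf{x})\cap\mathcal{A}(H^c)$, which is the assertion. The delicate point of the whole argument is the second paragraph: because $\phi$ is only a forward semiflow, one cannot simply ``run time backwards'' from $z_0$, so the backward-infinite calibrated curve has to be produced inside $\omega(\mathbf{x})$ by the above compactness construction, whose crucial input is that each base point $\mathbf{x}(t_k-T)$ already lies in $\omega(\mathbf{x})$, so that the continuity Lemma~\ref{limiting_GC} applies and the limit stays in $\omega(\mathbf{x})$.
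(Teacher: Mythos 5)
Your proposal is correct and follows essentially the same route as the paper: extract backward-in-time limits of orbit segments ending near the differentiability point to build a generalized characteristic $\mathbf{y}:(-\infty,0]\to\T^n$ inside $\omega(\mathbf{x})$, propagate differentiability of $u_c$ backward along it via the uniqueness of characteristics and \cite[Lemma 3.2]{Cannarsa-Cheng3}, verify calibration with the Fenchel identity, and conclude from the fact that the $\alpha$-limit set of a backward calibrated curve lies in $\mathcal{A}(H^c)$. The only (inessential) differences are that you skip the forward half of the global characteristic, which the paper constructs but never uses, and you leave the backward regularity-propagation step as a citation where the paper spells out the unique-maximizer/backward-calibrated-curve argument.
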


\begin{proof}	
	By assumption, there exists a $\sigma=\{i_1,i_2,\ldots,i_k,\ldots\}$ such that $\lim_{k\to\infty}\mathbf{x}(i_kt_0)=z$ where $u_c$ is differentiable at $z$, $\lim_{j\to\infty}(i_{k_j+1}-i_{k_j})=\infty$ and the sequence $\{i_{k+1}-i_k\}$ is strictly increasing without loss of generality.
	
	Now let $z_k=\mathbf{x}(i_kt_0)$ ($k\in\N$). For any positive integer $k$, define $\mathbf{x}^k:[0,(i_{k+1}-i_k)t_0]\to\T^n$ by $\mathbf{x}^k(s)=\mathbf{x}(s+i_{k}t_0)$. In particular, $\mathbf{x}^k(0)=z_k$. Invoking Lemma \ref{limiting_GC}, there exists $\mathbf{x}^+:[0,+\infty)\to\T^n$ such that $\mathbf{x}^k$ converges uniformly to $\mathbf{x}^+$ on any compact subinterval of $[0,+\infty)$. Moreover, $\mathbf{x}^+$ satisfies the generalized characteristic inclusion. On the other hand, we define $\mathbf{y}^k:[-(i_{k+1}-i_k)t_0,0]\to\T^n$ by $\mathbf{y}^k(s)=\mathbf{x}(s+i_{k+1}t_0)$. Similarly, by the Ascoli-Arzela theorem and the upper-semicontinuity of the compact-valued multifuncntion $z\rightrightarrows\text{co}\,H^c_p(x,D^+u_c(x))$, there exists $\mathbf{x}^-:(-\infty,0]\to\T^n$ such that $\mathbf{y}^k$ converges uniformly to $\mathbf{x}^-$ on any compact subinterval of $(-\infty,0]$. The juxtaposition of $\mathbf{x}^-$ and $\mathbf{x}^+$ gives the desired global generalized characteristic which is denoted by $\mathbf{z}$. Notice that $\overline{\{\mathbf{z}(s):s\in\R\}}\subset\omega(\mathbf{x})$ and $\mathbf{z}(0)=z$.
	
	Since $u_c$ is differentiable at $z$, we claim that $\mathbf{z}(s)$, $s\leqslant0$, are all points of differentiability of $u_c$. Indeed, for any $s\in[-t_0,0)$, by the uniqueness of the generalized characteristics, we have that $\mathbf{z}(0)$ is the unique maximizer of the function $u_c(\cdot)-A_{-s}(\mathbf{z}(s),\cdot)$. Thus, 
	$$
	L^c_v(\xi(0),\dot{\xi}(0))=D_yA_{-s}(\mathbf{z}(s),\mathbf{z}(0))=Du_c(\mathbf{z}(0)),
	$$
	where $\xi:[s,0]\to\T^n$ is a minimizer for $A_{-s}(\mathbf{z}(s),\mathbf{z}(0))$. Since $u_c$ is differentiable at $\mathbf{z}(0)$, there exists a unique backward calibrated curve $\gamma$ on $(-\infty,0]$ such that $\gamma(0)=\mathbf{z}(0)$ and $L^c_v(\gamma(0),\dot{\gamma}(0))=Du_c(\mathbf{z}(0))$. It follows that $\xi$ and $\gamma$ coincide on $[s,0]$ and $u_c$ is differentiable at $\mathbf{z}(s)$.
	
	We also claim that $\mathbf{z}:(-\infty,0]\to\T^n$ is a $u_c$-calibrated curve. For convenience, we work on $\R^n$. Indeed, suppose $s\leqslant0$, by the formula of the directional derivative of semiconcave function, we have that
\begin{align*}
	\frac d{ds}u_c(\mathbf{z}(s))=&\,Du_c(\mathbf{z}(s))\cdot\dot{\mathbf{z}}(s)= L^c_v(\mathbf{z}(s),\dot{\mathbf{z}}(s)) \cdot\dot{\mathbf{z}}(s)\\
	=&\,H^c(\mathbf{z}(s),L^c_v(\mathbf{z}(s),\dot{\mathbf{z}}(s)))+L^c(\mathbf{z}(s),\dot{\mathbf{z}}(s))\\
	=&\,H^c(\mathbf{z}(s),Du_c(\mathbf{z}(s)))+L^c(\mathbf{z}(s),\dot{\mathbf{z}}(s))=L^c(\mathbf{z}(s),\dot{\mathbf{z}}(s)).
	\end{align*}
Then, by integrating the equality above for any $s_1<s_2\leqslant 0$, we conclude that $\mathbf{z}$ is a $u_c$-calibrated curve.
	
	It is clear that $\overline{\{\mathbf{z}(s):s\in\R\}}\subset\omega(\mathbf{x})$ and $\overline{\{\mathbf{z}(s):s\in\R\}}\cap\mathcal{I}_{u_c}\not=\varnothing$ since the $\alpha$-limit set of $\mathbf{z}$ is contained in $\mathcal{A}(H^c)$. Therefore, we conclude that $\omega(\mathbf{x})\cap\mathcal{A}(H^c)\not=\varnothing$.
\end{proof}

\section{An example where singularities approach the Aubry set}

Here we give an example based on Bangert's work on minimal geodesics on 2-torus $\T^2$, although it is well known as Aubry-Mather theory of area-preserving monotone twist maps. The standard literatures include Bangert's survey article \cite{BangertBook} and Mather-Forni's lecture notes \cite{Mather-Forni}.

Let $\R^{\Z}$ be the set of bi-infinite sequences of real numbers with the product topology, and an element in $\R^{\Z}$ will be denoted by $\{x_i\}_{i\in\Z}$. Given a function $h:\R^2\to\R$, which is called a {\em generating function of a variational principle}, a segment $(x_j,\ldots,x_k)$, $j<k$, is called {\em minimal} if
$$
h(x_j,x_{j+1},\ldots,x_k):=\sum_{i=j}^{k-1}h(x_i,x_{i+1})\leqslant\sum_{i=j}^{k-1}h(y_i,y_{i+1})
$$
for all $(y_j,\ldots,y_k)$ with $y_j=x_j$ and $y_k=x_k$. A bi-infinite sequence $\{x_i\}$ is said to be {\em minimal} if every finite segment of $\{x_i\}$ is minimal. We always call a minimal bi-infinite sequence $\{x_i\}\in\R^{\Z}$ a {\em minimal configuration}, and  denote by $\mathcal{M}=\mathcal{M}(h)$ the set of minimal configurations with respect to the generating function $h$. To define a monotone twist map by a generating function $h$, we suppose $h$ satisfies the following conditions:
\begin{enumerate}[(h1)]
  \item $h(x+1,y+1)=h(x,y)$ for all $(x,y)\in\R^2$;
  \item $\lim_{|y|\to+\infty}h(x,x+y)=+\infty$ uniformly in $x$;
  \item If $x_1<x_2$ and $y_1<y_2$, then
  $$
  h(x_1,y_1)+h(x_2,y_2)<h(x_1,y_2)+h(x_2,y_1);
  $$
  \item If $(x_{-1},x_0,x_1)\not=(y_{-1},y_0,y_1)$ are minimal and $x_0=y_0$, then $(x_{-1}-y_{-1})(x_1-y_1)<0$.
\end{enumerate}
When $h$ is smooth, it is to define an area-preserving monotone twist map $F:\mathbb{S}^1\times\R\to\mathbb{S}^1\times\R$ such that its lift $\tilde{F}:\R\times\R\to\R\times\R$ satisfying
$$
\tilde{F}(x_0,y_0)=(x_1,y_1)\quad\Leftrightarrow\quad y_0=-D_1h(x_0,x_1),\ y_1=D_2h(x_0,x_1).
$$

There exists a continuos map $\tilde{\rho}:\mathcal{M}\to\R$ such that, if $\xi=\{x_i\}\in\mathcal{M}$ then $|x_i-x_0-i\tilde{\rho}(x)|<1$ for all $i\in\Z$. In fact, $\tilde{\rho}(\xi)=\lim_{i\to\infty}(x_i-x_0)/i$ and is called the {\em rotation number} of the minimal configuration $\xi$. Moreover, by the order-preserving properties of the minimal configurations (Lemma of Aubry graphs), one could  associate a minimal configuration $\xi=\{x_i\}$ with a (order-preserving) circle map $f$ satisfying $f(x_i)=x_{i+1} \pmod 1$, $i\in\Z$, and the Poincar\'e's rotation number $\rho(f)=\tilde{\rho}(\xi)$. If $\rho(f)=\omega$ is irrational, then the recurrent set $\mathcal{M}^{rec}(f)$ of $f$ is either the whole circle  or a Cantor subset which is usually called Denjoy set.

Let $g$ be a Riemannian metric on $\T^2$ with $d$ its Riemannian distance and $\tilde{d}$ the $\Z^2$-periodic distance which is the lift of $d$ to $\R^2$. An associated generating function $h$ is defined as followed: Let $(i,x_i)\in\Z\times\R^1$, define
\begin{equation}\label{eq:generating_function_1}
	h(x_i,x_{i+1})=\tilde{d}((i,x_i),(i+1,x_{i+1})).
\end{equation}
It is known that such $h$ is a generating function satisfying the condition (h1)-(h4). If  $\gamma:\R\to\T^2$ is a {\em minimal geodesic\footnote{Due to Bangert, a geodesic is said to be minimal if for any $[a,b]$, $d(\gamma(a),\gamma(b))=\text{length}\,(\gamma|_{[a,b]})$.}}, then $\{\tilde{\gamma}(i)\}$ is a minimal configuration where $\tilde{\gamma}$ is a lift of $\gamma$ to $\R^2$ (see Figure \ref{fig_1}). 

\begin{figure}
	\begin{center}
\begin{tikzpicture}[scale=1]
\draw[-] [name path=hline 1] (-0.5,0) -- (5.5,0);
\draw[-] [name path=vline 1] (0,-0.5) -- (0,2.5);
\draw[-] [name path=vline 2] (1,0) -- (1,2.5);
\draw[-] [name path=vline 3] (2,0) -- (2,2.5);
\draw[-] [name path=vline 4] (3,0) -- (3,2.5);
\draw[-] [name path=vline 5] (4,0) -- (4,2.5);
\draw[-] [name path=vline 6] (5,0) -- (5,2.5);
\draw [red][name path=curve 1] (-0.5,0.3) .. controls (2,0.8) and (4,2.0) .. (5.3,2.3);
\fill [name intersections={of=vline 1 and curve 1, by={a1}}] 
        (a1) circle (1.5pt); 
        \draw (-0.1,0.3) node[right] {$\scriptscriptstyle (0,x_0)$};
\fill [name intersections={of=vline 2 and curve 1, by={a2}}] 
        (a2) circle (1.5pt);
        \draw (0.9,0.5) node[right] {$\scriptscriptstyle (1,x_1)$};
\fill [name intersections={of=vline 3 and curve 1, by={a3}}] 
        (a3) circle (1.5pt);
        \draw (1.9,0.85) node[right] {$\scriptscriptstyle (2,x_2)$};
\fill [name intersections={of=vline 4 and curve 1, by={a4}}] 
        (a4) circle (1.5pt);
        \draw (2.9,1.25) node[right] {$\scriptscriptstyle (3,x_3)$};
\fill [name intersections={of=vline 5 and curve 1, by={a5}}] 
        (a5) circle (1.5pt);
      \draw (3.9,1.65) node[right] {$\scriptscriptstyle (4,x_4)$};
\fill [name intersections={of=vline 6 and curve 1, by={a6}}] 
        (a6) circle (1.5pt);
      \draw (4.9,2.05) node[right] {$\scriptscriptstyle (5,x_5)$};
\end{tikzpicture}
\end{center}
\caption{The red line is the minimal configuration of a minimal geodesic.}\label{fig_1}
\end{figure}
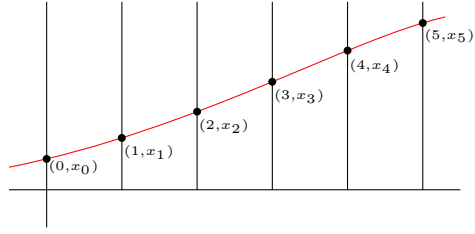

Now we consider a  {\em big bump \footnote{A big bump on a torus $(\T^2,g)$ is a closed disk $B\subset\T^2$ with the following property: there exists  $x\in B$ so that the (Riemannian) distance from $x$ to  $\partial B$ is larger than $\frac 14\text{length}\, (\partial B)$.}} Riemannian metric $g_x(v,v)=\frac{1}{2}\langle A(x)v,v\rangle$ (See \cite{BangertBook}). As a special example of what in \eqref{eq:mech_intro}, we consider the corresponding  Hamiltonian   
\begin{equation}\label{eq:geodesic_Hamiltonian}
	H(x,p)=\frac{1}{2}\langle A^{-1}(x)p,p\rangle,\quad (x,p)\in\T^2\times\R^2.
\end{equation}

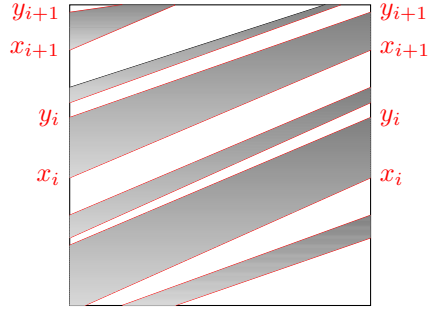
\begin{figure}
	\begin{center}
\begin{tikzpicture}[scale=1]
\draw[-] (0,0) -- (4,0) -- (4,4) -- (0,4) -- (0,0);
\draw [red] (0,2.5) node[left] {$y_i$} -- (4,3.9) node[right] {$y_{i+1}$};
\draw [red] (0,1.7) node[left] {$x_i$} -- (4,3.4) node[right] {$x_{i+1}$};
\shade[top color=gray,bottom color=gray!30] (0,1.7) -- (0,2.5) -- (4,3.9) -- (4,3.4) -- (0,1.7);
\draw [red] (0,3.9) node[left] {$y_{i+1}$} -- (0.7,4);
\draw [red] (0,3.4) node[left] {$x_{i+1}$} -- (1.4,4);
\shade[top color=gray,bottom color=gray!30] 
     (0,3.4) -- (0,3.9) -- (0.7,4) -- (1.4,4) -- (0,3.4);
\draw [red] (0.7,0) -- (4,1.2);
\draw [red] (1.4,0) -- (4,0.9);
\shade[top color=gray,bottom color=gray!30] 
     (0.7,0) -- (4,1.2) -- (4,0.9) -- (1.4,0) -- (0.7,0);
\draw [red] (0,0.9) -- (4,2.7);
\draw [red] (0,1.2) -- (4,2.9);
\shade[top color=gray,bottom color=gray!30] 
(0,1.2) -- (4,2.9) -- (4,2.7) -- (0,0.9) -- (0,1.2);
\draw [red] (0,2.7) -- (3.6,4);
\draw (0,2.9) -- (3.4,4);
\shade[top color=gray,bottom color=gray!30]
(0,2.7) -- (3.6,4) -- (3.4,4) -- (0,2.9) -- (0,2.7);
\draw [red] (0,0.8) -- (4,2.5);
\draw [red] (0.2,0) -- (4,1.7);
\shade[top color=gray,bottom color=gray!30]
(0,0) -- (0.2,0) -- (4,1.7) -- (4,2.5) -- (0,0.8) -- (0,0);
\draw [red] (4,2.5) node[right] {$y_{i}$};
\draw [red] (4,1.7) node[right] {$x_{i}$};
\end{tikzpicture}
\end{center}
\caption{The shaded part stands for the complement of the Denjoy set. The red line is a minimal geodesic with rotation number $\alpha\in\R\setminus\Q$.}\label{fig_2}
\end{figure}

Let $\mathcal{M}_\omega\subset\T^2$ denote the set of all minimal geodesics with rotation number $\omega$, denote by  $\mathcal{M}^{rec}_\omega$ the set of recurrent points in $\mathcal{M}_\omega$. Since $g$ is  big bump type, then for any  
$\omega\in\R\setminus\Q$, $\mathcal{M}^{rec}_\omega$ is a Denjoy set(\cite[Proposition 9.7]{BangertBook}).  On the other hand, it is well-known in Aubry-Mather theory and weak KAM theory that there exists some $c\in\R$ such that the projected Aubry set $\mathcal{A}(H^c)=\mathcal{M}^{rec}_\omega$.   

Restricted on the vertical line $\{0\}\times\T$, the complement of the Denjoy set $\mathcal{M}^{rec}_\omega$ is composed of a countable family of open intervals (see Figure \ref{fig_2}). Let $\{(x_i,y_i)\}$ be a sequence of intervals produced by the iteration of any of such an open interval, say $(x_0,y_0)$, then these intervals can not intersect each other by the property of order-preserving. This leads to $\sum_{i\in\Z}(y_i-x_i)\leqslant1$ by the periodicity. Therefore, for any $\varepsilon>0$, there exists $i_0\in\N$ such that $\sum_{i>i_0}(y_i-x_i)<\varepsilon$. Let $u_c$ be a weak KAM solution of the Hamilton-Jacobi equation
\begin{equation}\label{example_HJE}
	H(x,c+Du(x))=\alpha_H(c).
\end{equation}
If $\mathbf{x}$ and $\mathbf{y}$ are the unique generalized characteristics with respect to  $u_c$, starting from $x$ and $y$ respectively, then there exists $C>0$ such that
	\begin{equation}\label{eq:depedence_G_C}
			|\mathbf{x}(s)-\mathbf{y}(s)|\leqslant C|x-y|, \quad s\in[0,1],
	\end{equation}
	since Proposition \ref{properties_g}. 
	
Now we claim that arbitrary $\epsilon$-neighborhood of $\mathcal{A}(H^c)=\mathcal{M}^{rec}_\omega$ contains singular points of $u_c$. Indeed,  for each $a\in(x_i,y_i)$ with some fixed $i>i_0$, define
$$
T_a=\sup\{t: \mathbf{x}(t,0,(0,a))\not\in\text{Sing}\, (u_c)\},
$$
then $T_a<\infty$ by Proposition \ref{properties_g} (a). Denote by $n_a=[T_a]$ the integer part of $T_a$ and set $(0,a')=\mathbf{x}(n_a+1,0,(0,a))$. Then $a'\in(x_{i+n_a+1},y_{i+n_a+1})$, hence our claim is direct from the definition of $T_{a'}$ and $\sum_{i>i_0}(y_i-x_i)<\varepsilon$ and \eqref{eq:depedence_G_C}. 
 
In conclusion, for the Hamiltonian in \eqref{eq:geodesic_Hamiltonian}, we have
\begin{Pro}
Let $\omega\in\R\setminus\Q$ and $c\in\R^2$ be described as above, then for any weak KAM solution $u_c$ of \eqref{example_HJE}, the singular set $\mbox{\rm Sing}\, (u_c)$ intersects any neighborhood of the projected Aubry set $\mathcal{A}(H^c)$.	
\end{Pro}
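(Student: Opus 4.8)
The plan is to assemble the statement from the geometric picture developed above. Since $\mathcal{A}(H^c)=\mathcal{M}^{rec}_\omega$ is compact, it suffices to prove that for every $\varepsilon>0$ the $\varepsilon$-neighborhood of $\mathcal{A}(H^c)$ meets $\SING$. First I would work on the transversal circle $\{0\}\times\T$: there the Denjoy set is a Cantor set whose complement is a countable disjoint union of gaps, permuted by the order-preserving circle map $f$ attached to the minimal configurations of rotation number $\omega$. Writing $(x_i,y_i)$ for the forward $f$-iterates of one gap, disjointness on a length-$1$ circle gives $\sum_i(y_i-x_i)\le 1$, so there is $i_0$ with $\sum_{i>i_0}(y_i-x_i)<\varepsilon$; hence every gap $(x_i,y_i)$ with $i>i_0$ has length $<\varepsilon$, and since its endpoints lie in $\mathcal{M}^{rec}_\omega$ the whole gap lies inside the $\varepsilon$-neighborhood of $\mathcal{A}(H^c)$.

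The core is to follow a generalized characteristic issued from such a deep gap. Pick $a$ with $(0,a)$ in a gap of index $>i_0$. If $(0,a)\in\SING$ we are done; otherwise $u_c$ is differentiable at $(0,a)$, and as long as it stays regular the curve $\mathbf{x}(t)=\mathbf{x}(t,0,(0,a))$ coincides with the $u_c$-calibrated curve through $(0,a)$ --- here I use the uniqueness of generalized characteristics for mechanical Hamiltonians (Proposition \ref{properties_g}) --- hence projects to a minimal geodesic whose successive crossings of the vertical circles run through the gaps $(x_{i_0+1+k},y_{i_0+1+k})$. Next I would show that $T_a:=\sup\{t:\mathbf{x}(t)\notin\SING\}<\infty$: following the text this uses Proposition \ref{properties_g}(a), the point being that a calibrated ray which does not extend to a bi-infinite calibrated curve --- as is the case starting from a gap point --- must run into a singularity in finite time. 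Set $n_a=[T_a]$ and $(0,a')=\mathbf{x}(n_a+1,0,(0,a))$. By the semiflow identity $\mathbf{x}(s,0,(0,a'))=\mathbf{x}(s+n_a+1,0,(0,a))$, together with $n_a+1>T_a$ and Proposition \ref{properties_g}(b), this curve lies in $\SING$ for all $s\ge 0$; in particular $(0,a')\in\SING$. Applying the Lipschitz bound \eqref{eq:depedence_G_C} on successive unit intervals, re-anchored at each step to the recurrent minimal geodesic bounding the current small gap, one gets that $a'$ falls in the gap $(x_{i_0+n_a+2},y_{i_0+n_a+2})$ of index $>i_0$, hence $(0,a')$ lies in the $\varepsilon$-neighborhood of $\mathcal{A}(H^c)$. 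This produces the required singular point.

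The step I expect to be the real obstacle is the middle one: making rigorous that the regular part of $\mathbf{x}$ is a genuine minimal geodesic whose combinatorics is governed by the gap dynamics of $f$, that between two consecutive vertical circles it cannot drift away from the Denjoy lamination, that the flow-time parametrization of $\mathbf{x}$ matches the configuration index $k$, and that $T_a<\infty$. This is exactly where Bangert's theory of minimal geodesics on $\T^2$ for big-bump metrics is needed, and it is why the argument detours through $T_a$ and the unit-time estimate \eqref{eq:depedence_G_C} rather than comparing directly from $a$. The first and last steps are soft, resting only on the Cantor structure of the Denjoy set, $\Z^2$-periodicity, and Proposition \ref{properties_g}.
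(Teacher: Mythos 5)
Your proposal is essentially the paper's own argument: the same gap decomposition of the complement of the Denjoy set with $\sum_i(y_i-x_i)\leqslant 1$, the same construction of $T_a$, $n_a=[T_a]$ and $(0,a')=\mathbf{x}(n_a+1,0,(0,a))$, and the same use of the propagation of singularities together with the Lipschitz dependence \eqref{eq:depedence_G_C} to place a singular point in a small gap near $\mathcal{A}(H^c)$. The steps you single out as the real obstacles (finiteness of $T_a$, the matching of unit flow-time with one step of the gap dynamics, and that $a'$ lands in a deep gap) are exactly the points the paper itself treats tersely, citing Proposition \ref{properties_g} and Bangert's theory without further detail, so your reconstruction is faithful to the published proof.
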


\appendix
\section{Torus case}\label{se:torus}
In this section, we adapt our results in \cite{Cannarsa-Cheng3} to the flat $n$-torus $\T^n$. Given a Lagrangian $L$  on $\T^n\times\R^n$, of class $ C^2$, we can lift $L$ to the universal covering space $\R^n\times\R^n$ and denote the lifted Lagrangian by $L$ as well. Then $L(x,v)$ is $\T^n$-periodic in $x$. For our regularity results, we suppose that $L$ satisfies the following conditions (L1) and (L2):
\begin{enumerate}[\rm(L1)]
	\item \emph{Uniform convexity}  There exists a nonincreasing function $\nu:\R_+\to\R_+$ such that $L_{vv}(x,v)\geqslant \nu(|v|)I$, for all $(x,v)\in\R^n\times\R^n$.
	\item \emph{Superlinear growth}  There exist two superlinear and nondecreasing functions  $\theta, \Theta: \R_+\to\R_+$ and a constant $c_0>0$ such that 
	 $$\Theta(|v|)\geqslant L(x,v)\geqslant \theta(|v|)-c_0,\quad \textup{for all~} (x,v)\in\R^n\times\R^n$$
\end{enumerate}

Let $Q:=(0,1]^n$ be a fundamental domain of the flat $n$-torus $\T^n$ lifted to the universal covering space $\R^n$. For each $x,x'\in\R^n$, we say that $x\sim x'$ if $x-x'\in\Z^n$ and we denote by $[x]$ the equivalence class of $x$. For any $x,y\in\R^n$ and $t>0$, we denote by $A_t([x],[y])$ the fundamental solution for $L$ on the torus, which is defined  as follows:
\begin{equation}\label{eq:fund_sol_torus}
	A_t([x],[y])=\inf_{x\in[x],y\in[y]}A_t(x,y),\qquad \forall [x],[y]\in\T^n.
\end{equation}
Similarly, denote by $H$  the associated $\T^n$-periodic Hamiltonian. Let $u$ be a $\T^n$-periodic viscosity solution of the Hamilton-Jacobi equation
$$
H(x,Du(x))=\alpha_H(0), \qquad x\in\R^n.
$$
Then the weak KAM solution on the torus $\T^n$, associated with $u$, has the form
\begin{equation}
	u([x])=u(x)\quad\mbox{for any}~ x \in\R^n.
\end{equation}

\begin{Lem}\label{fund_sol_torus}
Let $L$ be a $\T^n$-periodic Tonelli Lagrangian satisfying {\rm (L1)} and {\rm (L2)}. There exists a small $t_0>0$ such that for any $0<t\leqslant t_0$, if $d([x],[y])<t$, there exist $x,y\in Q$  satisfying $x\in [x]$, $y\in [y]$, $|x-y|<t$ and $A_t([x],[y])=A_t(x,y)$.
\end{Lem}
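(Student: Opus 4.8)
The plan is to show that, once $t$ is small, the infimum defining $A_t([x],[y])$ in \eqref{eq:fund_sol_torus} is actually a minimum, and that it is realized by the two lifts that are closest to each other in $\R^n$ — the point being that a minimizing curve between such lifts is too short to ``wind around'' the lattice $\Z^n$. Throughout, $A_t(z,w)$ denotes the fundamental solution on $\R^n$ (a genuine minimum, by classical Tonelli theory), and I use that $L$ is $\Z^n$-periodic, so $A_t(z,w)=A_t(z+k,w+k)$ for $k\in\Z^n$.

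The core is an a priori diameter estimate coming from (L2). Suppose $0<t\le 1$ and $z,w\in\R^n$ satisfy $A_t(z,w)\le t\Theta(1)$. Then for a $C^2$ minimizer $\xi:[0,t]\to\R^n$ of $A_t(z,w)$ one has $\int_0^t\theta(|\dot\xi|)\,ds\le A_t(z,w)+c_0t\le Mt$ with $M:=\Theta(1)+c_0$. Superlinearity of $\theta$ gives, for $K:=M+1$, a constant $R\ge 0$ with $\theta(r)\ge Kr$ whenever $r\ge R$, hence $r\le R+K^{-1}\theta(r)$ for every $r\ge0$; integrating yields $\operatorname{diam}\xi([0,t])\le\int_0^t|\dot\xi|\,ds\le t(R+1)$. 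So, setting $t_0:=\min\{1,\tfrac1{2(R+2)}\}$, for every $0<t\le t_0$ such a minimizer stays in a ball of radius $<\tfrac12$; in particular its endpoints cannot be two distinct points of the same $\Z^n$-orbit.

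Granting this, the rest is routine. Fix the representative $x^\ast\in[x]\cap Q$. The superlinear lower bound in (L2) makes $y\mapsto A_t(x^\ast,y)$ coercive, so its infimum over the translated lattice $[y]$ is attained at some $y^\ast$, i.e. $A_t([x],[y])=A_t(x^\ast,y^\ast)$. Now let $\bar y\in[y]$ realize the torus distance, $|x^\ast-\bar y|=d([x],[y])<t$. The affine curve from $x^\ast$ to $\bar y$ has speed $d/t<1$, so by (L2) $A_t(x^\ast,\bar y)\le t\Theta(d/t)\le t\Theta(1)$, whence $A_t(x^\ast,y^\ast)\le A_t(x^\ast,\bar y)\le t\Theta(1)$. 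Applying the diameter estimate to $(x^\ast,y^\ast)$ gives $|x^\ast-y^\ast|<\tfrac12$, while $|x^\ast-\bar y|<t\le t_0<\tfrac12$ by choice; hence $|y^\ast-\bar y|\le|x^\ast-y^\ast|+|x^\ast-\bar y|<1$, and since $y^\ast-\bar y\in\Z^n$ this forces $y^\ast=\bar y$. Thus $x^\ast\in Q$, $|x^\ast-y^\ast|=d([x],[y])<t$, and $A_t([x],[y])=A_t(x^\ast,y^\ast)$, which is the assertion with $x:=x^\ast$ and $y:=y^\ast$.

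The main obstacle is the second paragraph: ruling out that an action-minimizing curve of small action can encircle the torus. This is exactly where the superlinear growth (L2) is indispensable, and it is what pins down how small $t_0$ must be taken; uniform convexity (L1) enters only through the uniqueness and $C^2$-regularity of the Tonelli minimizers used along the way. The coercivity needed for the minimum in \eqref{eq:fund_sol_torus} to be attained, and the closing lattice argument, are standard.
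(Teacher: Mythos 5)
Your argument is correct, and it establishes exactly what the lemma is needed for: the infimum in \eqref{eq:fund_sol_torus} is attained at representatives whose Euclidean distance equals the torus distance $<t$, with a threshold $t_0$ depending only on $L$. The route differs from the paper's mainly in organization. The paper never uses attainment of the infimum over the lattice nor any property of minimizing curves: it bounds $A_t(x,y)$ from above by the straight-line action $t\kappa_1(|y-x|/t)$ and bounds $A_t(x,y')$ from below by the Fenchel-type estimate $C_1|y'-x|-tC_2$ coming from (L2), then chooses $t_0$ so that the upper bound for the nearby representative is below the lower bound for every representative $y'$ with $|y-y'|\geqslant 1$; this is a direct ``near beats far'' comparison of actions. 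You instead first secure an optimal representative $y^{\ast}$ by coercivity, then prove a uniform no-winding (diameter) estimate for Tonelli minimizers of small action, and close with the lattice-gap argument $|y^{\ast}-\bar y|<1$. The quantitative inputs are the same in both proofs (straight-line upper bound of order $t\Theta(1)$, superlinear lower bound on $\theta$), so your version buys a reusable a priori bound on minimizers at the modest cost of invoking Tonelli existence and regularity, which is where (L1) enters. One cosmetic point, which your write-up shares with the paper's: the representative $y^{\ast}=\bar y$ closest to $x^{\ast}$ need not lie in $Q$, so the literal assertion ``$x,y\in Q$'' is not obtained (the paper's proof likewise assumes $x,y\in Q$ together with $|x-y|<t$, which does not follow from $d([x],[y])<t$ alone); what both arguments actually prove, and what is used afterwards, is the existence of representatives with $|x-y|<t$ realizing $A_t([x],[y])$.
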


\begin{proof}
For any $t>0$ and $x,y\in\R^n$, let $\sigma(s)=x+\frac st(y-x)$ for all $s\in[0,t]$. Then
\begin{equation}
\label{eq:torus}
A_t(x,y)\leqslant \int^t_0L(\sigma(s),\dot{\sigma}(s)) \leqslant t\kappa_1(|y-x|/t),
\end{equation}
where $\kappa_1(r)=\max_{z\in\T^n, |v|\leqslant r}L(z,v)$. On the other hand, for fixed $k\geqslant0$, we have
\begin{align}\nonumber
	A_t(x,y)=&\inf_{\xi\in\Gamma^t_{x,y}}\int^t_0L(\xi(s),\dot{\xi}(s))\ ds\geqslant\inf_{\xi\in\Gamma^t_{x,y}}\int^t_0\theta(|\dot{\xi}(s)|)-c_0\ ds\\\label{eq:torus2}
	\geqslant&\inf_{\xi\in\Gamma^t_{x,y}}\int^t_0\big(k|\dot{\xi}(s)|-\theta^*(k)-c_0\big)  ds\\\nonumber
	=& \,k|y-x|-t(\theta^*(k)+c_0)=C_1|y-x|-tC_2.
\end{align}
Let $x,y\in Q$ be such that $x\in [x]$, $y\in [y]$.  Setting 
$$
t_0=\frac{C_1}{\kappa_1(1)+C_2+C_1}<1,
$$
then for any $0<t\leqslant t_0$ and $|y-x|<t$, we have
$$
t\kappa_1(|y-x|/t)+tC_2+C_1|y-x|\leqslant C_1\leqslant C_1|y-y'|, \quad \textup{for all~} y'\notin Q \textup{~with~} y'\in[y].
$$
since $|y-y'|\geqslant 1$. By the above inequality, \eqref{eq:torus} and \eqref{eq:torus2}  we obtain
\begin{align*}
	A_t(x,y)\leqslant C_1(|y-y'|-|y-x|)-tC_2
	\leqslant C_1|y'-x|-tC_2\leqslant A_t(x,y').
\end{align*}
This leads to our conclusion.
\end{proof}

By appealing to Lemma \ref{fund_sol_torus} and the compactness of $\T^n$, one can adapt the proof of all the results in \cite{Cannarsa-Cheng3} and realize that these regularity properties of the fundamental solution hold in the torus case as well. Similarly, the global propagation result was obtained thanks to the local regularity properties and uniform estimates for fundamental solutions that are in turn consequences of our assumptions on the Lagrangian.
Since such estimates are valid for the torus in view of the compactness, global propagation holds as well.

%

\end{document}